\newtheorem{theo}{Theorem}[section]
\newtheorem{prop}[theo]{Proposition}
\newtheorem{lemm}[theo]{Lemma}
\theoremstyle{definition}
\newtheorem{defi}[theo]{Definition}
\newtheorem{exam}[theo]{Example}
\theoremstyle{remark}
\newtheorem{rema}[theo]{Remark}
\newcommand{\bb}[1]{\mathbb{#1}}
\newcommand{\al}[1]{\mathcal{#1}}
\newcommand{\sr}[1]{\mathscr{#1}}
\newcommand{\ak}[1]{\mathfrak{#1}}
\newcommand{\ol}{\overline}
\newcommand{\ra}{\rightarrow}
\newcommand{\lra}{\longrightarrow}
\newcommand{\x}[1]{\text{#1}}
\newcommand{\xb}[1]{\text{\textbf{#1}}}
\begin{document}
\title{Moduli spaces of anti-invariant vector bundles over a curve}
\author{Hacen ZELACI}
\address{Laboratoire de Math\'ematiques J.A. Dieudonn\'e.}
\curraddr{}
\email{z.hacen@gmail.com}
\date{\today}

\begin{abstract}
	Let $X$ be a smooth irreducible projective curve with an involution $\sigma$. A vector bundle $E$ over $X$ is called anti-invariant if there exists an isomorphism $\sigma^*E\ra E^*$. In this paper, we give a construction of the moduli spaces of anti-invariant vector bundles over $X$.
\end{abstract}
\maketitle
\tableofcontents

\section{Introduction}
Let $X$ be a smooth irreducible projective curve over $\bb C$ of genus $g_X\geqslant2$, with an involution $\sigma$. We assume that the fixed locus of $\sigma$ contains $2n$ points. Let $\pi:X\ra Y=X/\sigma$ be the associated double cover and denote by $R$ the ramification locus. \\ A vector bundle $E$ over $X$ is called anti-invariant if there exists an isomorphism $\psi:\sigma^*E\ra E^*$. If $E$ is stable, then such isomorphism is unique up to scalar, and it verifies $\sigma^*\psi=\pm\,^t\psi$. In the $"+"$ case the vector bundle  $E$ is called $\sigma-$symmetric, and in the $"-"$ case it is called $\sigma-$alternating.\\
Denote by $\al{U}_X^{\sigma,+}(r)$ the locus of stable $\sigma-$symmetric vector bundles of rank $r$ and by $\al{U}_X^{\sigma,-}(r)$ the locus of stable $\sigma-$alternating vector bundles of rank $r$. Also we denote by $\al{SU}_X^{\sigma,\pm}(r)$ the loci of $\sigma-$symmetric and $\sigma-$alternating vector bundles with trivial determinant.

In \cite{Z}, we studied the Hitchin systems over these loci and deduced a classification of their connected components. More precisely, we showed the following:
\begin{itemize}
	\item If $\pi$ is ramified, then 
	\begin{itemize}
		\item  $\al{U}_X^{\sigma,+}(r)$ and $\al{SU}_X^{\sigma,+}(r)$ are connected. 
		\item  $\al{U}_X^{\sigma,-}(r)$ has two connected components, when $r$ is even (and empty otherwise).
		\item $\al{SU}_X^{\sigma,-}(r)$ has $2^{2n-1}$ connected components (when $r$ is even).
	\end{itemize}
	\item If $\pi$ is unramified, then 
	\begin{itemize}
		\item $\al{U}_X^{\sigma,+}(r)\cong \al{U}_X^{\sigma,-}(r)$ and each one has two connected components.
		\item $\al{SU}_X^{\sigma,+}(r)$ is connected.
		\item  $\al{SU}_X^{\sigma,-}(r)$ is connected if $r$ is even, and empty otherwise. \\
	\end{itemize}
\end{itemize}   
The $2^{2n-1}$ connected components of  $\al{SU}_X^{\sigma,-}(r)$ (for even rank $r$) are  indexed by some  types $\tau=(\tau_p)_{p\in R}\mod \pm 1$, where $\tau_p\in \{\pm1\}$ is the \emph{Pfaffian} of the $\sigma-$alternating isomorphism $\psi:\sigma^*E\ra E^*$ over $p$ (the definition of the types is independent from $(E,\psi)$).\\

One can show that the stack of these anti-invariant vector bundles can be identified with the stack of parahoric $\al G-$torsors over the quotient curve $Y$, for some \emph{twisted} parahoric group schemes $\al G$ (see \cite{PR}, \cite{He}). In the first section, we elaborate this in details. \\

The main topic of this paper is to give a construction of the moduli spaces that parameterize the $\sigma-$symmetric and $\sigma-$alternating vector bundles over $X$. \\

I am very grateful to Christian PAULY for his continuous support and suggestions. \\

\section{Bruhat-Tits parahoric $\al G-$torsors}

Consider a $\sigma$-group scheme $\al G$ over $ X$, which means a smooth affine group scheme over $ X$ with an action of $\sigma$ lifted from its action on $ X$. Let $\al H=\x{Res}_{ X/Y}(\al G)^\sigma$ the invariant subgroup scheme of the Weil restriction of $\al G$ with respect to $\pi: X\ra Y$, i.e. the scheme that represents the functor $\pi_*(\al G)^\sigma$.   By \cite{BLR},  Theorem 4 and Proposition 6 the functor  $\pi_*(\al G)$ is representable, hence the subfunctor $\pi_*(\al G)^\sigma$ is representable too. We assume that $\al H$ is not empty.

\begin{defi}[$(\sigma, \al G)-$bundle] \label{sigmagbundle} A $(\sigma, \al G)-$bundle over $ X$ is a $\al G-$bundle $E$ over $ X$ with a lifting of the action of $\sigma:X\ra X$ to the total space of $E$  such that for each $x\in E$ and $g\in\al G$, we have $\sigma(x\cdot g)=\sigma(x)\cdot\sigma(g)$.
\end{defi}

Note that the action of $\sigma$ on $E$ is not a $\al G-$morphism. But it gives an isomorphism of total spaces (by the universal property of the fiber product) $$\xymatrix{E\times_{ X} X\ar[rr]^\varphi\ar[rd]&&E\ar[ld]\\& X}$$ 
which verifies $$\varphi(v\cdot g)=\varphi(v)\cdot \sigma(g)$$ 
for $g\in \al G\times_{ X} X$ and $v\in E\times_{ X} X$. This is again not a $\al G-$morphism, but we can associate to it canonically a $\al G$-isomorphism (over the identity of $ X$) $$E \stackrel{\sim}{\longrightarrow} E^\sigma$$
where $E^\sigma=(E\times_{X} X)\times^{\al G}\al G$, here $\al G$ acts on itself via $\sigma$.
\begin{theo}{\cite{BS}}\label{BStheo}
	Let $\sr M_{X}(\sigma,\al G)$ be the moduli stack of $(\sigma,\al G)-$bundles over $ X$, then we have an isomorphism $$\sr M_{X}(\sigma,\al G)\stackrel{\sim}{\longrightarrow} \sr M_Y(\al H)$$
	given by the invariant direct image $\pi^\sigma_*$.  
\end{theo}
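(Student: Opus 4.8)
The plan is to realize $\pi_*^\sigma$ as the composite of two equivalences: the Weil-restriction equivalence relating $\al G$-torsors on $X$ to $\x{Res}_{X/Y}(\al G)$-torsors on $Y$, and the passage from $\sigma$-equivariant torsors to torsors under the fixed subgroup scheme $\al H$. First I would recall the adjunction between base change and Weil restriction: for any $Y$-scheme $T$ one has $\x{Hom}_Y(T,\x{Res}_{X/Y}\al G)=\x{Hom}_X(T\times_Y X,\al G)$, yielding a counit $c:\pi^*\x{Res}_{X/Y}\al G\ra\al G$ of $X$-group schemes. Since $\pi$ is finite and locally free of rank $2$, the functor $\x{Res}_{X/Y}$ carries $\al G$-torsors on $X$ to $\x{Res}_{X/Y}\al G$-torsors on $Y$, compatibly with arbitrary base change on $Y$, and hence defines a morphism of stacks $\sr M_X(\al G)\ra\sr M_Y(\x{Res}_{X/Y}\al G)$; its quasi-inverse sends a torsor $P$ to $\pi^*P\times^{\pi^*\x{Res}_{X/Y}\al G}\al G$ via $c$. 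That these are mutually quasi-inverse would follow from the triangle identities together with faithfully flat descent along $\pi$.

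Next I would incorporate the involution. The deck transformation of $\pi$ induces the action of $\sigma$ on $\x{Res}_{X/Y}\al G$ used to define $\al H=(\x{Res}_{X/Y}\al G)^\sigma$. Under the equivalence above, the canonical $\al G$-isomorphism $E\xrightarrow{\sim}E^\sigma$ constituting a $(\sigma,\al G)$-structure corresponds precisely to an isomorphism of $\x{Res}_{X/Y}\al G$-torsors between $\x{Res}_{X/Y}(E)$ and its $\sigma$-twist, that is, to a $\sigma$-equivariant structure on $\x{Res}_{X/Y}(E)$; the requirement that the lift be an involution matches the cocycle condition for descent along $\sigma$. Thus $\sr M_X(\sigma,\al G)$ is identified with the stack of $\sigma$-equivariant $\x{Res}_{X/Y}\al G$-torsors on $Y$.

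The heart of the argument is then to show that taking $\sigma$-invariants, $P\mapsto P^\sigma$, is an equivalence from $\sigma$-equivariant $\x{Res}_{X/Y}\al G$-torsors to $\al H$-torsors, with quasi-inverse $Q\mapsto Q\times^{\al H}\x{Res}_{X/Y}\al G$ endowed with its tautological $\sigma$-structure; here the smoothness of $\al H$, guaranteed by \cite{BLR}, ensures that $P^\sigma$ is again a torsor. I would verify this étale-locally on $Y$: over the étale locus $Y\sm\pi(R)$ the cover splits, $\x{Res}_{X/Y}\al G$ becomes $\al G\times\al G$ with $\sigma$ the swap and fixed subgroup the diagonal, so the claim reduces to the elementary fact that a $G\times G$-torsor with swap-equivariant structure is the structure-group extension of a diagonal $G$-torsor. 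Composing $\x{Res}_{X/Y}$ with this invariants functor recovers exactly $\pi_*^\sigma$, proving the theorem.

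The main obstacle is the behaviour over the ramification locus $R$. There the second projection $X\times_Y X\ra X$ degenerates, since $X\times_Y X=\Delta_X\cup\Gamma_\sigma$ meets along $R$, and a ramified double cover does not split even étale-locally on $Y$; consequently $\sigma$ acts on $\x{Res}_{X/Y}\al G$ with genuine fixed points and the naive swap computation no longer applies at the branch points. The delicate step is to show that $P\mapsto P^\sigma$ still yields an $\al H$-torsor and remains an equivalence there. I expect this to require a local analysis of $\x{Res}_{X/Y}\al G$ and of its fixed subgroup on the henselization of a branch point, using the smoothness of $\al H$ to take invariants fppf-locally and to check that the natural maps $Q\times^{\al H}\x{Res}_{X/Y}\al G\to P$ and $P\to P^\sigma$ are isomorphisms even where $\sigma$ is not free. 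Establishing this local statement, rather than the generic étale comparison, is where the real work lies.
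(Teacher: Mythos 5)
First, a point of reference: the paper offers no proof of this statement at all---Theorem \ref{BStheo} is quoted from Balaji--Seshadri \cite{BS}---so your attempt can only be measured against the argument in that source, which is in fact exactly the strategy you outline: factor $\pi^\sigma_*$ as Weil restriction followed by $\sigma$-invariants, verify the equivalence \'etale-locally over the unramified locus by the swap computation ($\x{Res}_{X/Y}\al G$ becomes $\al G\times\al G$ with the flip, whose fixed subgroup is the diagonal), and then carry out a genuinely local analysis at the branch points. Your first two steps (the adjunction and counit, the identification of a $(\sigma,\al G)$-structure on $E$ with a $\sigma$-equivariant structure on $\x{Res}_{X/Y}(E)$, and the reduction over $Y\sm\pi(R)$) are correct and standard, and your description of the degeneration of $X\times_Y X$ along $R$ is accurate.

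The genuine gap is the one you flag yourself: all of the content of this theorem lives at the branch points, and you defer precisely that step. Two things in particular are missing. First, the assertion that smoothness of $\al H$ ``ensures that $P^\sigma$ is again a torsor'' is an unjustified leap: smoothness gives neither surjectivity of $P^\sigma\ra Y$ over a branch point $p$ nor simple transitivity of the $\al H$-action on the fiber there; establishing both is part of what has to be proved. Second, what \cite{BS} actually does at a branch point---and what any completion of your argument must reproduce---is a local classification: every $(\sigma,\al G)$-bundle on the formal disc $\x{Spec}(\sr O_x)$ at a ramification point $x$ is $\sigma$-equivariantly trivializable, with $\sigma$ acting on the trivial bundle through a twisted automorphism of finite order (a ``local type''); one then identifies the invariant direct image of each such local model as a torsor under $\al H|_{\x{Spec}(\sr O_p)}$, using that $\al H(\sr O_p)=\al G(\sr O_x)^\sigma$ is a parahoric subgroup of the twisted group $\al G(\sr K_x)^\sigma$, and finally one glues local and generic data, using triviality of torsors under the smooth group scheme $\al H$ over the complete local rings $\sr O_p$ together with a patching (Beauville--Laszlo type) argument. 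This is where Bruhat--Tits theory enters and it is the heart of \cite{BS}; your proposal reduces the theorem to this local statement but does not prove it, so as it stands it is an outline of the known proof with its core omitted.
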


We return now to our situation. Let $\al G=\x{Res}_{ X/Y}(\xb{SL}_r)^{\sigma}$,   where $\xb{SL}_r$ is the constant group scheme $ X\times \text{SL}_r$  over $X$ and the action of $\sigma$ on $\xb{SL}_r$ is given  by $$\sigma(x,g)=(\sigma(x),\, ^tg^{-1}).$$ Fix  a $\sigma-$alternating vector bundle $(F_\tau,\psi_\tau)$ of type $\tau$. 
Define   $\al P_\tau=\x{Aut}({F}_\tau)$,  it is a group scheme \'etale locally isomorphic to $\xb{SL}_r$. The isomorphism $\psi_\tau:\sigma^*F_\tau\ra F_\tau^*$ induces an involution, denoted $\sigma^\tau$, on $\al P_\tau$ given by  $$\alpha\lra \,^t\psi^{-1}_\tau\circ \sigma^*(\,^t\alpha^{-1})\circ \, ^t\psi_\tau,$$ so $(\sigma^\tau,\al P_\tau)$ is $\sigma-$group scheme over $X$.  Finally define  the group scheme $$\al H_\tau=\x{Res}_{X/Y}(\al P_\tau)^{\sigma^\tau}.$$
We remark that $\al G$ and $\al H_\tau$ are nonempty.

\begin{prop}\label{parahoric}
	The group schemes $\al G$ and $\al H_\tau$ are smooth affine separated group schemes of finite type which are parahoric Bruhat-Tits group schemes.  If $r\geqslant 3$, $\al G$ and $\al H_\tau$ are not generically constant.  The set of $y\in Y$ such that $\al G_y$ and $(\al H_\tau)_y$ are not semi-simple is exactly the set of branch points of the double cover $\pi: X\ra Y$.
\end{prop}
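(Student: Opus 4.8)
The plan is to reduce the whole statement to a local study on $Y$, since smoothness, affineness, separatedness, finiteness of type, the parahoric property, and the semisimplicity of a fibre are all local conditions, and both $\x{Res}_{X/Y}$ and the formation of $\sigma$-invariants commute with flat base change on $Y$. I would begin by identifying the generic fibre. With $F=\bb C(Y)$ and $K=\bb C(X)$, the generic fibre of $\al G$ is $\x{Res}_{K/F}(\xb{SL}_r)^{\sigma}$, where $\sigma$ is the Galois involution of $K/F$ composed with $g\mapsto\,^tg^{-1}$; this is precisely the quasi-split special unitary group of the extension $K/F$, which is semisimple of type $A_{r-1}$. The same computation for $\al P_\tau$ with the alternating pairing $\psi_\tau$ exhibits the generic fibre of $\al H_\tau$ as the corresponding (inner-twisted) special unitary group. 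In particular both $\al G$ and $\al H_\tau$ are integral models over $Y$ of a semisimple group over $F$, which is the first requirement for a parahoric Bruhat--Tits group scheme.

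Next I would analyse the fibres point by point. Over a point $y\in Y$ outside the branch locus, $\pi$ is étale with $\pi^{-1}(y)=\{x_1,x_2\}$, so the fibre at $y$ of $\x{Res}_{X/Y}(\xb{SL}_r)$ is $\xb{SL}_r\times\xb{SL}_r$, on which $\sigma$ acts by $(g_1,g_2)\mapsto(\,^tg_2^{-1},\,^tg_1^{-1})$; the fixed locus is the graph $\{(g,\,^tg^{-1})\}\cong\xb{SL}_r$, so $\al G_y\cong\xb{SL}_r$ is semisimple and the local model is hyperspecial. Over a branch point $y$, where $\pi$ is locally $t\mapsto t^2=s$, I would write an $A$-point ($A=\bb C[[s]]$) of $\al G$ as $g(t)\in\xb{SL}_r(B)$, $B=A[t]/(t^2-s)$, subject to $\,^tg(-t)\,g(t)=I$. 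Expanding $g=g_0+g_1 t$ and reducing modulo $s$, the constant term gives $g_0\in\x{SO}_r$ and the linear term forces $\,^tg_0\,g_1$ to be symmetric and traceless; hence the special fibre is the semidirect product $\x{SO}_r\ltimes V$, with $V$ the vector group of traceless symmetric matrices. This fibre has nontrivial unipotent radical $V$, so it is \emph{not} semisimple, while its reductive quotient is $\x{SO}_r$. The identical computation for $\al H_\tau$, with $\psi_\tau$ alternating, replaces the symmetric datum by a symplectic one and produces a special fibre $\x{Sp}_r\ltimes V'$ (forcing $r$ even), again with nonzero unipotent radical. This establishes the last assertion: the fibres that fail to be semisimple are exactly those over the branch points of $\pi$.

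It remains to certify the parahoric property and the formal finiteness statements. For the former I would match the completions just computed against the Bruhat--Tits parahorics of the ramified unitary group classified in \cite{PR}: the self-dual-lattice description of $\x{Res}_{B/A}(\xb{SL}_r)^{\sigma}$ pins down a specific vertex of the local Dynkin diagram, and one verifies that our integral model is the associated \emph{connected} parahoric group scheme. Smoothness, affineness, separatedness and finite type then follow formally, because $\x{Res}_{X/Y}$ along the finite locally free degree-two map $\pi$ preserves these properties, and taking $\sigma$-invariants under a $\bb Z/2$-action in characteristic zero preserves smoothness (the fixed locus of a linearly reductive group is smooth). Finally, for non-generic-constancy when $r\geqslant3$: the automorphism $g\mapsto\,^tg^{-1}$ of $\xb{SL}_r$ is \emph{outer} exactly when $r\geqslant3$, since it realises the nontrivial symmetry of the Dynkin diagram $A_{r-1}$; twisting by the connected (hence nontrivial) étale double cover $X\ra Y$ therefore yields a genuine outer form, which cannot be constant on any dense open of $Y$. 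For $r=2$ the same automorphism is inner and the form is split, which explains the hypothesis.

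The main obstacle I anticipate is the branch-point step, and specifically the reconciliation of the explicit special fibre $\x{SO}_r\ltimes V$ (respectively $\x{Sp}_r\ltimes V'$) of the ramified Weil restriction with the Bruhat--Tits/Pappas--Rapoport parahoric dictionary, together with the verification that the model we obtain is the \emph{connected} N\'eron-type parahoric rather than merely some smooth model with the correct generic and special fibres. By contrast the unramified fibres, the generic fibre, and the formal finiteness properties are routine.
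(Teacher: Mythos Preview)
Your argument tracks the paper's closely: the explicit fibre computations at unramified and ramified points are the same, yielding $\x{SL}_r$ away from the branch locus and an extension of $\x{SO}_r$ (resp.\ $\x{Sp}_r$) by a vector group of traceless symmetric (resp.\ alternating) matrices over each branch point, and the formal finiteness properties are deduced from the same standard facts about Weil restriction and fixed points. The one substantive divergence is in certifying the parahoric property. You propose to identify the completed local model directly with a Bruhat--Tits parahoric via the self-dual-lattice description of \cite{PR}; the paper instead invokes \cite{PR2} (taking $I=\{0\}$) together with the criterion of \cite{Ku2} that a group scheme is parahoric if and only if its affine flag variety is ind-proper, and then exhibits $\al G(\sr K_p)/\al G(\sr O_p)$ (resp.\ $\al H_\tau(\sr K_p)/\al H_\tau(\sr O_p)$) as a direct limit of symplectic (resp.\ special orthogonal) Grassmannians. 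Both routes are standard; yours is more hands-on but requires the connectedness check you rightly flag, while the paper's ind-properness argument sidesteps that issue. Finally, your treatment of non-generic-constancy via the fact that $g\mapsto\,^tg^{-1}$ is an outer automorphism of $\xb{SL}_r$ precisely when $r\geqslant3$ is more explicit than the paper, which does not spell this point out in its proof.
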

\begin{proof}
	For the first part, we refer to \cite{BLR}, Section 7.6, proposition 5.  As well as \cite{EB}, Proposition 3.5. Moreover, by \cite{PR2}, \S4, taking $I=\{0\}$, we deduce that  $\al G(\sr O_p)$ is a parahoric subgroup of $\al G(\sr K_p)$, where here $\sr O_p$ is the completion of the local ring at the branch point $p\in Y$, and $\sr K_p$ its fraction field. Further, for every $p\in B$, one can show that the flag variety $\al G(\sr K_p)/\al G(\sr O_p)$ (resp. $\al H_\tau(\sr K_p)/\al H_\tau(\sr O_p)$) is a direct limit of symplectic  (resp. special orthogonal) Grassmannian which is proper, hence these flag varieties are ind-proper. But by \cite{Ku2}, the group $\al G$ is parahoric if and only if its associated flag variety is ind-proper. So we deduce that $\al G(\sr O_p)$ (resp. $\al H_\tau(\sr O_p)$) are parahoric subgroups of $\al G(\sr K_p)$ (resp. $\al H_\tau(\sr K_p)$).  \\ 
	
	We can calculate the fibers of $\al G$ explicitly, let $x\in X$ not in $R$ (recall that $R$ is the divisor of ramification points), let  $y=\pi(x)$. 
	By definition, we have $$\al G_y=\xb{SL}_r(\pi^{-1}(y))^\sigma=(\x{SL}_r\times \x{SL}_r)^{\sigma},$$ where $\sigma(g,h)=(^th^{-1},^tg^{-1})$. So $$\al G_y=\{(g,\,\,^tg^{-1})\;|\;g\in \x{SL}_r\}\cong \text{SL}_r.$$
	Now, take $p\in B$, $\pi^{-1}(p)$ is, scheme theoretically, a double point $\x{Spec}(\bb C[\varepsilon])$, with $\varepsilon^2=0$, this gives $$\al G_p=\xb{SL}_r(\pi^{-1}(p))^\sigma=\xb{SL}_r(\bb C[\varepsilon])^\sigma,$$  
	where the action of $\sigma$ on $\bb C[\varepsilon]$ is given by $\varepsilon\ra -\varepsilon$. So $\al G_p$ is the group of elements $(g,h)$ such that 
	$$\;\;g+\varepsilon h= \,^t(g-\varepsilon h)^{-1}= (^tg-\varepsilon \,^th)^{-1}$$
	$$=\;^tg^{-1}+\varepsilon \, ^tg^{-1}\,^th^tg^{-1},$$
	and $$\x{det}(g+\varepsilon h)=1.$$
	In other words $g=\,^tg^{-1}$, $^tgh=\,^t(^tgh)$ and $g+\varepsilon h$ has determinant $1$. So $g\in \text{SO}_r(\bb C)$, and $h$ is an $r\times r$ matrix such that  $^tgh$ is symmetric.  The last condition on the determinant  is equivalent to $$\x{det}(I_r+\varepsilon \;^tgh)=1+\varepsilon\x{Tr}(^tgh)=1.$$
	Hence $\x{Tr}(^tgh)=0$. 
	It follows that $\al G_p$ is isomorphic to  $\x{SO}_r(\bb C)\times \x{Sym}^0_r(\bb C)$ with low given by $(g,h)(k,l)=(gk,gl+hk)$, where $\x{Sym}_r^0(\bb C)$ is the additive group of symmetric traceless matrices.
	We have a non split exact sequence: $$0\ra \x{Sym}_r^0(\bb C)\ra\al G_p\ra \text{SO}_r(\bb C)\ra 1.$$   Note that $\al G_p$ is not semi-simple. 
	
	With the same calculation, assume that $r$ is even, we get   $$(\al H_\tau)_p\cong \text{SL}_{r}\;\, \text{for } p\in Y \x{ not a branch point},$$ 
	and for a branch point $p$ we have  $$0\ra ASym_{r,p}^0\ra (\al H_\tau)_p \ra \text{Sp}_{r}(\bb C)\ra 0,$$
	where  $$ASym_{r,p}=\{h\in M_r| Tr(h)=0,\, M_ph=\,^thM_p=-\,^t(M_ph)\},$$
	where $M_p=(\,^t\psi_\tau^{-1})_p$ and $\text{Sp}_{r}(\bb C)$ is the symplectic group over $\bb C$.
\end{proof}

Let $\sr{SU}_X^{\sigma,+}(r)$ (resp. $\sr{SU}_X^{\sigma,\tau}(r)$) the stack defined by associating to a $\bb C-$algebra $R$ the groupoid of $(E,\delta,\psi)$, where $E$ is a $\sigma-$symmetric (resp. $\sigma-$ alternating of type $\tau$) vector bundle over $X_R=X\times \x{Spec}(R)$, $\delta$ a trivialization of $\x{det}(E)$ and a $\sigma-$symmetric (resp. $\sigma-$alternating of type $\tau$) isomorphism $\psi:\sigma^*E\stackrel{\sim}{\lra} E^*$ which is compatible (in the obvious sens) with $\delta$. 

\begin{prop} \label{parabundles}Let $\sr M_Y(\al G)$ (resp. $\sr M_Y(\al H_\tau)$) be the stack of right $\al G-$torsors (resp.  $\al H_\tau-$torsors) on $Y$, then $\sr M_Y(\al G)$ (resp. $\sr M_Y(\al H_\tau)$) is a smooth algebraic stack, locally of finite type, which is isomorphic to $\sr{SU}^{\sigma,+}_X(r)$ (resp. $\sr{SU}^{\sigma,\tau}_X(r)$).
\end{prop}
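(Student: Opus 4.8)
The plan is to establish the stated isomorphism by combining two ingredients already available in the excerpt: the general equivalence of Theorem \ref{BStheo} between $(\sigma,\al G)$-bundles and $\al H$-torsors, and the explicit identification of the relevant group schemes carried out in Proposition \ref{parahoric}. Let me think about what needs to be proven.

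We have two claims packed together. First, smoothness and algebraicity (locally of finite type) of the stacks $\sr M_Y(\al G)$ and $\sr M_Y(\al H_\tau)$. Second, the isomorphism with the anti-invariant stacks $\sr{SU}^{\sigma,+}_X(r)$ and $\sr{SU}^{\sigma,\tau}_X(r)$.

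For the algebraicity/smoothness part: these are moduli stacks of torsors under a smooth affine group scheme of finite type over a curve $Y$. There's a standard theory (Behrend-Dhillon, Heinloth, etc.) that $\sr{Bun}_{\al G}$ for such a parahoric Bruhat-Tits group scheme is a smooth algebraic stack locally of finite type. Since Proposition \ref{parahoric} established that $\al G$ and $\al H_\tau$ are smooth affine parahoric Bruhat-Tits group schemes, one can cite Heinloth's result directly. Smoothness follows because the obstruction to deforming a $\al G$-torsor lives in $H^2$ of the curve with coefficients in the adjoint bundle, which vanishes since $Y$ is a curve (cohomological dimension 1 for the relevant sheaf).

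For the isomorphism part, which is the heart: I need to relate torsors under $\al H = \mathrm{Res}_{X/Y}(\al G_0)^\sigma$ to anti-invariant bundles. Let me reconstruct the dictionary. The excerpt sets $\al G = \mathrm{Res}_{X/Y}(\mathbf{SL}_r)^\sigma$ with the action $\sigma(x,g) = (\sigma(x), {}^tg^{-1})$. By Theorem \ref{BStheo}, $\al H$-torsors on $Y$ — wait, actually the notation has $\al G$ playing the role of "$\al H$" from Theorem \ref{BStheo}. Let me be careful: in Theorem \ref{BStheo} the stack $\sr M_X(\sigma, \al G)$ of $(\sigma,\al G)$-bundles is isomorphic to $\sr M_Y(\al H)$ where $\al H = \mathrm{Res}_{X/Y}(\al G)^\sigma$. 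Here the relevant "$\al G$" upstairs is $\mathbf{SL}_r$ with the given $\sigma$-action, and the invariant Weil restriction is exactly the group scheme called $\al G$ in Proposition \ref{parahoric}. So $\sr M_Y(\al G) \cong \sr M_X(\sigma, \mathbf{SL}_r)$, the stack of $(\sigma, \mathbf{SL}_r)$-bundles.

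The remaining task is to unwind the definition of $(\sigma, \mathbf{SL}_r)$-bundle and match it with the anti-invariant data $(E,\delta,\psi)$. A $\mathbf{SL}_r$-bundle is a vector bundle $E$ with trivialized determinant $\delta$; the lift of $\sigma$ to the total space compatible with the twisted action $\sigma(x,g)=(\sigma(x),{}^tg^{-1})$ translates, via the discussion preceding Theorem \ref{BStheo} giving the canonical isomorphism $E \xrightarrow{\sim} E^\sigma$, into precisely an isomorphism $\sigma^* E \xrightarrow{\sim} E^*$ compatible with $\delta$. The sign of the action (the compatibility $\sigma^*\psi = \pm\, {}^t\psi$) fixes whether we land in the $\sigma$-symmetric ("+") or $\sigma$-alternating case. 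For $\al H_\tau$, the group scheme is built from $\al P_\tau = \mathrm{Aut}(F_\tau)$ with the involution $\sigma^\tau$, so $\al H_\tau$-torsors are bundles twisted from the fixed $\sigma$-alternating reference bundle $(F_\tau,\psi_\tau)$; the type $\tau$ is pinned down by the reference Pfaffian data over the ramification points, matching $\sr{SU}^{\sigma,\tau}_X(r)$. The main step here is checking that the equivalence of groupoids is natural in the test $\bb C$-algebra $R$ and respects the determinant trivialization — i.e., that the functorial data match on both sides over $X_R$.

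I expect the main obstacle to be the careful bookkeeping at the ramification/branch points, where the group schemes degenerate (the fibers $\al G_p$, $(\al H_\tau)_p$ are non-semisimple as computed in Proposition \ref{parahoric}), and verifying that an $\al H_\tau$-torsor records exactly the Pfaffian type $\tau_p$ of $\psi$ at each $p \in R$ rather than some coarser or finer invariant. This is precisely where the parahoric (as opposed to reductive) structure enters, and where one must confirm that the symplectic/orthogonal Grassmannian local models from Proposition \ref{parahoric} encode the correct local type. The rest — the global dictionary between twisted invariant-restriction torsors and anti-invariant bundles — is a descent argument that follows formally from Theorem \ref{BStheo} once the local picture is fixed.
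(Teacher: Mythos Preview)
Your outline matches the paper's proof: cite Heinloth for smoothness and algebraicity, apply Theorem~\ref{BStheo} to reduce $\sr M_Y(\al G)$ to $\sr M_X(\sigma,\xb{SL}_r)$, and identify the latter with $\sr{SU}^{\sigma,+}_X(r)$ via the frame bundle $\x{Isom}(\al O_{X_R}^{\oplus r},E)$ (and similarly $\sr M_Y(\al H_\tau)\cong\sr{SU}^{\sigma,\tau}_X(r)$ via $\x{Isom}(F_\tau,E)$). One remark: the ramification-point bookkeeping you flag as the main obstacle does not appear in the paper's argument --- it is absorbed entirely by Theorem~\ref{BStheo} and the choice of the reference $(F_\tau,\psi_\tau)$, and the only explicit verification is the global one-line check that the involution $\tilde\psi(f)={}^t(\psi\circ\sigma^*f)^{-1}$ on the frame bundle squares to the identity (using $\sigma^*\psi={}^t\psi$).
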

\begin{proof}
	The first part of the theorem is proved in \cite{He}, Proposition 1.\\
	By Theorem \ref{BStheo}, $\sr M_Y(\al G)\cong \sr M_X(\sigma,\xb{SL}_r)$. So it is sufficient to show $\sr M_X(\sigma,\xb{SL}_r)\cong\sr{SU}^{\sigma,+}_X(r)$. 
	Let $R$ be a $\bb C-$algebra, and $(E,\delta,\psi)$ be an element of $\sr M_X(\sigma,\xb{SL}_r)(R)$. 
	Consider  the  automorphism of the frame bundle $\tilde{E}:=\x{Isom}(\al O_{X_R}^{\oplus r},E)$ given by $$ \tilde{\psi}(f)=\,^t(\psi\circ\sigma^*f)^{-1},$$ for $f\in \tilde{E}$ (we identify $\sigma^*(\al O_{X_R}^{\oplus r}) \cong\al O_{X_R}^{\oplus r}$ using the canonical linearization). Since $\sigma^*\psi=\,^t\psi$, we get $\tilde{\psi}\circ\tilde{\psi}(f)=f,$
	thus $$\tilde{\psi}^2=id,$$ so $\tilde{\psi}$ is a lifting of the action of $\sigma$ to $\tilde{E}$, and any other lifting differs by an involution of $\al O_{X_R}^{\oplus r}$. Moreover, for $g\in \xb{SL}_r(R)$, we have $$\tilde{\psi}(f\cdot g)=\tilde{\psi}(f)\cdot\sigma(g),$$
	where $\sigma(g)=\, ^tg^{-1}$. Thus $\tilde{E}$ is $(\sigma, \xb{SL}_r)-$bundle.
	
	Conversely,   a $\al G-$bundle $E$ over $Y_R$ gives, by theorem \ref{BStheo},  a $(\sigma,\xb{SL}_r)-$bundle over $X_R$ denoted again by $E$. Let $\tilde{\psi}$ the action of $\sigma$ on $E$. Then  $$E(\bb C^r):=E\times^{\xb{SL}_r}\bb C^r$$ is $\sigma-$anti-invariant vector bundle. Let $U$ be a $\sigma-$invariant open subset of $X_R$ such that $E(\bb C^r)|_U$ is trivial, fix a $\sigma-$invariant trivialization $\varphi:\al O_U^{\oplus r}\ra E(\bb C)|_U$. Define  $\psi|_U=\,^t\tilde{\psi}(\varphi)^{-1}\circ \sigma^*\varphi^{-1}$, then $\psi$ is a $\sigma-$symmetric isomorphism $\sigma^*E(\bb C^r)|_U\ra E(\bb C^r)^*$. Gluing such local isomorphisms, we get an isomorphism $\psi:\sigma^*E(\bb C^r)\ra E(\bb C^r)^*$. Hence we get an element of $\sr M_X(\sigma,\xb{SL}_r)(R)$.
	
	Now, let $(E,\psi)$ be a $\sigma-$alternating vector bundle. Consider the bundle $$\tilde{E}=\x{Isom}(F_\tau,E).$$ It is an $\al H_\tau-$bundle. Moreover, $\psi$ induces an automorphism $\tilde{\psi}$ on $\tilde{E}$ given by $$\tilde{\psi}(f)= \,^t\psi^{-1}\circ \,^t(\sigma^*f)^{-1}\circ\,^t\psi_\tau. $$ Clearly this is an involution which makes $\tilde{E}$ a $(\sigma^\tau,\al P_\tau)-$bundle. 
	
	Conversely, a $(\sigma^\tau,\al P_\tau)-$ bundle gives, with exactly the same method as before, a $\sigma-$alternating vector bundle.
\end{proof}

\section{ $\sigma-$quadratic and $\sigma-$alternating modules} \label{app}
In this section, we will introduce the notion of $\sigma-$quadratic and $\sigma-$alternating  modules and study their moduli. This will be used later in the construction of the moduli space of $\sigma-$symmetric and $\sigma-$alternating anti-invariant vector bundles. Mainly we will elaborate only  the case of $\sigma-$quadratic modules. Our main reference here is \cite{So}.\\

Let $W$ be a finite dimension vector space with an involution $\sigma$, and $H$ a vector space. A $\sigma-$quadratic (resp. $\sigma-$alternating) form  is a linear map $q:H\lra H^*\otimes W$ such that for all $x,y\in H$ $$q(x)(y)=\sigma(q(y)(x)) \;\;(\x{resp. } q(x)(y)=-\sigma(q(y)(x))).$$
A $\sigma-$quadratic (resp. $\sigma-$alternating) module with values in $W$ is a pair $(H,q)$ as above. A map between two $\sigma-$quadratic or $\sigma-$alternating  modules $(H,q)$ and $(H',q')$ is a linear map $f:H\ra H'$ such that $$q=(\,^tf\otimes\x{id})\circ q'\circ f.$$ For a vector subspace $V\subset H$, we define its orthogonal to be $$V^{\perp_\sigma}=\{x\in H\,|q(x,y)=0\;\forall y\in V\}.$$ A $\sigma-$isotropic (resp. totally $\sigma-$isotropic) subspace $V$ of $(H,q)$ is a vector subspace such that $V\cap V^{\perp_\sigma}\not =0$ (resp. $V\subset V^{\perp_\sigma}$). We will mainly use the notion of totally $\sigma-$isotropic as we will see later on. Also, from now on, we consider only the $\sigma-$quadratic modules. Similar results about semistability, filtrations and $S-$equivalence of $\sigma-$alternating forms can be checked in this case too. We omit the details.

\begin{defi}
	The $\sigma-$quadratic module $(H,q)$ is called semi-stable (resp. stable) if for any non-zero totally $\sigma-$isotropic vector subspace $V\subset H$ we have $$\x{dim}(V)+\x{dim}(V^{\perp_\sigma})\leqslant \x{dim}(H)\;\;(\x{resp.}\;<).$$
\end{defi}
Remark that a semi-stable $\sigma-$quadratic module is necessarily injective.

Denote by $\Gamma(H,W)^\sigma$ the vector space of $\sigma-$quadratic forms $q:H\ra H^*\otimes W$, and let $P(H,W)^\sigma=\bb P\Gamma(H,W)^\sigma$. The group  $\x{SL}(H)$ acts linearly in a natural way  on $\Gamma(H,W)^\sigma$ by associating to $q$ the $(\,^tg^{-1}\otimes\x{id})\circ q\circ g^{-1}$.  This action induces clearly an action on $P(H,W)^\sigma$.
\begin{prop} \label{ssqad}
	A $\sigma-$quadratic module $(H,q)$ is semi-stable (resp. stable) if and only if the point $[q]\in P(H,W)^\sigma$ is semi-stable (resp. stable) with respect to the action of $\x{SL}(H)$.
\end{prop}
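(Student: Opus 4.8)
The plan is to prove this via the Hilbert--Mumford numerical criterion for GIT (semi)stability, matching the combinatorial ``weight'' inequality coming from one-parameter subgroups of $\x{SL}(H)$ against the isotropy inequality defining semi-stability of $\sigma$-quadratic modules. Since $[q]\in P(H,W)^\sigma=\bb P\Gamma(H,W)^\sigma$ is a point in a projective space on which $\x{SL}(H)$ acts linearly (through the natural linearization on $\Gamma(H,W)^\sigma$ given in the excerpt), GIT semi-stability of $[q]$ is detected by testing all nontrivial one-parameter subgroups $\lambda:\bb G_m\ra\x{SL}(H)$ and computing the Mumford weight $\mu([q],\lambda)$. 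The goal is to show $\mu([q],\lambda)\leqslant 0$ for all $\lambda$ (resp. $<0$, resp. with the stable convention) precisely when every nonzero totally $\sigma$-isotropic $V\subset H$ satisfies $\x{dim}(V)+\x{dim}(V^{\perp_\sigma})\leqslant\x{dim}(H)$.

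First I would fix a one-parameter subgroup $\lambda$ and diagonalize it: choose a basis of $H$ in which $\lambda(t)$ acts with integer weights $w_1\leqslant\cd\leqslant w_n$ summing to zero (the $\x{SL}$ condition), inducing a weight filtration $0=V_0\subset V_1\subset\cd\subset V_k=H$. Then I would compute how $\lambda$ acts on $\Gamma(H,W)^\sigma\subset H^*\otimes H^*\otimes W$: a coordinate $q_{ij}$ (the component pairing basis vectors $e_i,e_j$) gets scaled with weight $-(w_i+w_j)$, while $W$ carries trivial $\x{SL}(H)$-weight. The Mumford weight $\mu([q],\lambda)$ is then $\max\{\,w_i+w_j\mid q_{ij}\neq 0\,\}$ up to the usual sign convention. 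The core computation is to translate the vanishing pattern $\{(i,j): q_{ij}=0\}$ into statements about orthogonality: the subspace $V_i^{\perp_\sigma}$ is governed exactly by which pairings with $V_i$ vanish, using the symmetry relation $q(x)(y)=\sigma(q(y)(x))$ to ensure the pattern is symmetric and the orthogonal complement is well-defined. I would reduce, by the standard convexity argument, to \emph{special} one-parameter subgroups adapted to a single flag, so that testing $\lambda$ amounts to testing the isotropy inequality for a single subspace $V=V_i$; this is where the $\x{SL}$-normalization (weights summing to zero) converts the weight inequality $\max(w_i+w_j)\leqslant 0$ directly into $\x{dim}(V)+\x{dim}(V^{\perp_\sigma})\leqslant\x{dim}(H)$.

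The main obstacle I anticipate is the bookkeeping that shows it suffices to test one-step flags (equivalently, that the worst destabilizing $\lambda$ can be taken adapted to a single totally $\sigma$-isotropic subspace), together with correctly handling the orthogonality $V^{\perp_\sigma}$ defined through the $\sigma$-twisted pairing rather than an ordinary symmetric form. Concretely, one must verify that for a weight filtration the relation $q_{ij}\neq 0$ with both $i,j$ in the bottom graded pieces forces those pieces to fail isotropy, and conversely that a totally $\sigma$-isotropic $V$ produces an explicit destabilizing $\lambda$ (weights $+a$ on $V^{\perp_\sigma}$, $0$ in between, $-b$ on a complement, balanced to lie in $\x{SL}$) whose weight equals $\x{dim}(V)+\x{dim}(V^{\perp_\sigma})-\x{dim}(H)$ up to a positive scalar. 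Once the dictionary ``maximal weight $\leftrightarrow$ isotropy defect'' is established in both directions, the equivalence of semi-stability (and, replacing $\leqslant$ by $<$, of stability) follows immediately; I would finally remark, as the excerpt does, that the identical argument with the sign-twisted pairing handles the $\sigma$-alternating case, so no separate proof is needed.
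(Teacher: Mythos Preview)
Your proposal is correct and follows essentially the same route as the paper: both directions go through the Hilbert--Mumford criterion, diagonalizing a one-parameter subgroup, reading off the weight on $q_{ij}$ as $-(w_i+w_j)$, and matching the numerical inequality against the isotropy defect $\x{dim}(V)+\x{dim}(V^{\perp_\sigma})-\x{dim}(H)$. The paper differs only in presentation: in the forward direction it does not invoke an abstract convexity reduction but instead, given any $\lambda$ with $\mu(\lambda,q)<0$, explicitly takes $V$ to be the sum of the eigenspaces with negative weight and runs a short chain of inequalities to contradict semi-stability; in the backward direction it writes down the three specific integers $m_1,m_2,m_3$ for the decomposition $H=V\oplus H_1\oplus H_2$ (your description ``$+a$ on $V^{\perp_\sigma}$, $0$ in between'' should read $+a$ on $V$, $0$ on a complement of $V$ in $V^{\perp_\sigma}$, $-b$ on a complement of $V^{\perp_\sigma}$), and computes $\mu(\lambda,q)=2m_2$ directly.
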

\begin{proof}We use Hilbert-Mumford criterion (\cite{Po} Theorem $6.5.5$) and we use also their notation for the weight.
	Assume that $q$ is semi-stable $\sigma-$quadratic form on $H$, let $\lambda$ be a non trivial one parameter subgroup of $\x{SL}(H)$. Consider the eigenvalue decomposition of $$H=\bigoplus_{i=1}^sH_i,$$ where the restriction of  $\lambda(t)$ to $H_i$ equals $t^{-m_i}\x{id}$, we assume also that $m_1<\cdots <m_s$. Since $\lambda(t)\in \x{SL}(H)$, we have $$\sum_{i=1}^sm_i\x{dim}(H_i)=0.$$ 
	Note that since $\lambda$ is not trivial, there exists $k$  such that $m_{k}<0 \leqslant m_{k+1}$. 
	Now $q$ decomposes  as  $q=\left(q_{ij}\right)_{ij}$, where $q_{ij}:H_i\lra H_j\otimes W$. It follows that the Hilbert-Mumford weight of $q$ is equal to $$\mu(\lambda,q)=-\x{min}\{m_i+m_j\;|\;\forall (i,j)\;\x{ such that }q_{ij}\not=0\}.$$ Suppose that  $\mu(\lambda,q)<0$ and let $V=\oplus_{i=1}^kH_i$. Then $$ V\oplus\bigoplus_{i\in I}H_i \;\subset V^{\perp_\sigma},$$ where  $I=\{i\geqslant k+1\;|\; m_j+m_i\leqslant0\;\x{ for all }j\leqslant k\}$. In particular $V$ is totally $\sigma-$isotropic. Let $l=\x{max}(I)$, so we get 
	\begin{align*}
		m_{l+1}\left(\x{dim}(V)+\x{dim}(V^{\perp_\sigma})\right)&\geqslant m_{l+1}\sum_{i=1}^k\x{dim}(H_i)+m_{l+1}\sum_{i=1}^l\x{dim}(H_i) \\ 
		&> -\sum_{i=1}^km_i\x{dim}(H_i)+m_{l+1}\sum_{i=1}^l\x{dim}(H_i) \\ 
		&= \sum_{i=k+1}^sm_i\x{dim}(H_i) +m_{l+1}\sum_{i=1}^l\x{dim}(H_i)\\ 
		&\geqslant m_{l+1}\sum_{i=1}^s\x{dim}(H_i) =m_{l+1}\x{dim}(H),
	\end{align*}   
	which contradicts the semistability of $q$, hence $\mu(\lambda,q)\geqslant0$. 
	
	Conversely, assume that for any $1-$parameter subgroup $\lambda$ we have $\mu(\lambda,q)\geqslant0$. Let $V\subset H$ be a totally $\sigma-$isotropic subspace with respect to $q$, and denote by $H_1$ a complementary subspace of $V$ in $V^{\perp_\sigma}$, and by $H_2$ a complementary subspace of $V^{\perp_\sigma}$ in $H$, so  we have $H=V\oplus H_1\oplus H_2$. Consider the integers 
	\begin{align*}
		m_1=2\x{dim}(H)-2\x{dim}(V)-\x{dim}(H_1),\\ m_2=\x{dim}(H)-2\x{dim}(V)-\x{dim}(H_1),\\ m_3=-2\x{dim}(V)-\x{dim}(H_1).
	\end{align*}  
	Then we have $m_3< m_2 < m_1$ and  $$m_1\x{dim}(V)+m_2\x{dim}(H_1)+m_3\x{dim}(H_2)=0.$$ Let's consider the $1-$parameter subgroup $\lambda$ of $\x{SL}(H)$ associated to the decomposition $H=V\oplus H_1\oplus H_2$ with characters given by the weights $m_1$, $m_2$ and $m_3$ (respecting the order of the decomposition). It follows that $\lambda$ acts on $q$ by the matrix $$\begin{pmatrix} 0 & 0 & t^{-m_1-m_3} \\ 0 & t^{-2m_2} & t^{-m_2-m_3} \\ t^{-m_1-m_3} & t^{-m_2-m_3} &t^{-2m_3} \end{pmatrix}.$$ By definition, we deduce that $$\mu(\lambda,q)=-\x{min}\{-2m_2,-m_1-m_3\}=2m_2,$$  and by hypothesis we have $\mu(\lambda,q)\geqslant 0$. Hence $m_2\geqslant 0$, which is exactly $$\x{dim}(V)+\x{dim}(V^{\perp_\sigma})\leqslant \x{dim}(H).$$  
\end{proof}

Let $(H,q)$ be a semi-stable and non-stable $\sigma-$quadratic module, there exists a minimal totally $\sigma-$isotropic subspace $ H_1$ of $H$  such that $\x{dim}(H_1)+\x{dim} (H_1^{\perp_\sigma})=\x{dim}(H)$. We repeat this procedure after replacing $H$ by $H_1^{\perp_\sigma}/H_1$ with its reduced $\sigma-$quadratic form. So we construct a filtration $$0\subset H_1\subset H_2\subset\cdots\subset H_k\subset H,$$ of totally $\sigma-$isotropic subspaces such that 
\begin{enumerate}
	\item[(i)] $H_{i}/H_{i-1}\subset H_{i-1}^{\perp_\sigma}/H_{i-1}$ are minimal totally $\sigma-$isotropic such that $$\x{dim} (H_{i}/H_{i-1})+\x{dim} (H_{i}/H_{i-1})^{\perp_\sigma}=\x{dim}(H_{i-1}^{\perp_\sigma}/H_{i-1}) .$$
	\item[(ii)] $H_{k}^{\perp_\sigma}/H_{k}$ is stable.
\end{enumerate}
For a subspace $V\subset H$, denote by $V^\vee$ the quotient $H/V^{\perp_\sigma}$. Assume that $V$ is totally isotropic with respect to $q$, then the form $q$ induces a morphism $\alpha:V\ra (V^\vee)^*\otimes W$. This induces a $\sigma-$quadratic module structure on $V\oplus V^\vee$ given by $$\begin{pmatrix} 0 & \sigma^*\alpha^\vee \\ \alpha & 0
\end{pmatrix}.$$

Given a $\sigma-$quadratic module $(H,q)$, we define the $\sigma-$quadratic graded module associated to $(H,q)$ to be $$\x{gr}(H,q)=H_{k}^{\perp_\sigma}/H_{k}\bigoplus_{i=1}^{k-1} (H_{i}/H_{i-1})\oplus (H_{i}/H_{i-1})^{\vee},$$ with the induced form. The integer $k$ is called the length of the graded $\sigma-$quadratic module. Two $\sigma-$quadratic modules are said $S-$equivalent if they have isomorphic graded modules.
\begin{prop} \label{sequiv}
	Let $Q(H,W)^\sigma=P(H,W)^{\sigma, ss}/\!\!/\x{SL}(H)$ be the geometric quotient of the subspace of semi-stable points $P(H,W)^{\sigma,ss}$ by $\x{SL}(H)$. Then a point of $Q(H,W)^\sigma$ represents  an $S-$equivalence class of $\sigma-$quadratic modules.
\end{prop}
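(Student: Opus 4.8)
The plan is to deduce the statement from the general theory of good GIT quotients, since $Q(H,W)^\sigma$ is by definition the good (indeed geometric on the stable locus) quotient of $P(H,W)^{\sigma,ss}$ by the reductive group $\x{SL}(H)$. Two basic facts from that theory will do most of the work: each fiber of the quotient map $P(H,W)^{\sigma,ss}\ra Q(H,W)^\sigma$ contains a unique closed $\x{SL}(H)$-orbit, and two semistable forms $[q],[q']$ have the same image if and only if the closures of their orbits meet inside $P(H,W)^{\sigma,ss}$. The proposition therefore reduces to two claims: (a) the semistable $\sigma$-quadratic modules with closed orbit are exactly the \emph{polystable} ones, i.e. those isomorphic to their graded module $\x{gr}(H,q)$; and (b) every semistable $[q]$ has $[\x{gr}(H,q)]$ in its orbit closure. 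Together these identify the unique closed orbit in each fiber with a graded module, and since $S$-equivalence is by definition isomorphism of graded modules, the fibers will be exactly the $S$-equivalence classes.

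For claim (b) I would argue by induction on the length $k$ of the isotropic filtration. Let $H_1$ be the minimal totally $\sigma$-isotropic subspace with $\x{dim}(H_1)+\x{dim}(H_1^{\perp_\sigma})=\x{dim}(H)$, choose a splitting $H=H_1\oplus(H_1^{\perp_\sigma}/H_1)\oplus H_1^\vee$, and take the one-parameter subgroup $\lambda$ acting on these summands with normalized weights $(1,0,-1)$; the equality $\x{dim}(H_1)=\x{dim}(H_1^\vee)$ guarantees $\lambda\in\x{SL}(H)$. The weight computation is exactly the one carried out in the converse part of Proposition \ref{ssqad}, and it gives $\mu(\lambda,q)=0$. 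Reading off the surviving entries of the action matrix there, the limit $q_0=\lim_{t\ra0}\lambda(t)\cdot q$ exists in $P(H,W)^{\sigma,ss}$ and is precisely $(H_1\oplus H_1^\vee)\oplus(H_1^{\perp_\sigma}/H_1)$ with the hyperbolic form on the first factor and the reduced form on the second, because the pairing $H_1\times H_1^\vee$ has weight $0$ while the cross terms $(H_1^{\perp_\sigma}/H_1)\times H_1^\vee$ and $H_1^\vee\times H_1^\vee$ have strictly negative weight and die. Since $\mu(\lambda,q)=0$, the point $q_0$ lies in $\overline{\x{SL}(H)\cdot[q]}$ and maps to the same point of $Q(H,W)^\sigma$ as $[q]$. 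Applying the inductive hypothesis to the reduced form on $H_1^{\perp_\sigma}/H_1$ and reassembling the hyperbolic summands produces $\x{gr}(H,q)$ in the orbit closure of $[q]$.

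For claim (a) I would show that a graded module, being a direct sum $(H_k^{\perp_\sigma}/H_k)\oplus\bigoplus_i(H_i/H_{i-1})\oplus(H_i/H_{i-1})^\vee$ of a stable module with hyperbolic modules, has closed $\x{SL}(H)$-orbit in the semistable locus. The standard route is the Hilbert-Mumford criterion for orbit closures: one checks that for any one-parameter subgroup $\lambda$ with $\mu(\lambda,\x{gr}(H,q))=0$, the weight-zero condition forces the associated isotropic flag to be compatible with the hyperbolic/stable decomposition, so that $\lim\lambda(t)\cdot[\x{gr}(H,q)]$ is again isomorphic to $\x{gr}(H,q)$ and hence lies in the orbit itself; thus no nontrivial degeneration escapes the orbit, and the orbit is closed. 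Combined with (b), this exhibits $[\x{gr}(H,q)]$ as the single closed orbit in the fiber over the image of $[q]$, which is the assertion.

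The step I expect to be the main obstacle is claim (a): proving that the polystable modules are \emph{exactly} the closed orbits requires controlling all semistability-preserving degenerations of a graded module and showing they cannot change its isomorphism type. This is where the explicit decomposition into a stable summand and hyperbolic summands, the reductivity of the stabilizer, and the well-definedness of $\x{gr}(H,q)$ up to isomorphism must be used, closely following the treatment of orthogonal and symplectic forms in \cite{So}.
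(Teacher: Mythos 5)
Your overall architecture is the same as the paper's: identify the fibers of $P(H,W)^{\sigma,ss}\ra Q(H,W)^\sigma$ with $S$-equivalence classes by proving (b) that $\x{gr}(H,q)$ lies in the closure of the $\x{SL}(H)$-orbit of $q$, and (a) that the orbit of a graded module is closed in the semistable locus, then invoking the uniqueness of the closed orbit in each fiber of a good quotient. Your proof of (b) is correct and is only a cosmetic variant of the paper's first step: where the paper builds a single one-parameter subgroup recursively (weights $t$, $\lambda'(t)$, $t^{-1}$ on $H_1\oplus H_2\oplus H_3$, with $\lambda'$ supplied by induction), you degenerate in stages, first to the hyperbolic-plus-reduced module and then inside the reduced summand; this is legitimate because orbit closure is transitive ($q_0\in\ol{\x{SL}(H)\cdot q}$ implies $\ol{\x{SL}(H)\cdot q_0}\subset\ol{\x{SL}(H)\cdot q}$), and your weight computation agrees with the one in the proof of Proposition \ref{ssqad}.

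The genuine gap is claim (a), which is where essentially all of the work in the paper's proof lies and which you only assert: you say that ``the weight-zero condition forces the associated isotropic flag to be compatible with the hyperbolic/stable decomposition, so that the limit is again isomorphic to $\x{gr}(H,q)$.'' That is precisely the statement requiring proof, and it is false in the naive reading: a one-parameter subgroup with $\mu(\lambda,\x{gr}(H,q))=0$ need not preserve, or be diagonal with respect to, the decomposition into a stable summand and hyperbolic summands; it can mix them, and one must still show that the limit does not change the isomorphism class. The paper's second step does this by induction on the length $k$: after reducing to $\lambda(t)$ stabilizing $H_1\subset H_1^{\perp_\sigma}$, it writes $\lambda(t)^{-1}$ in block upper-triangular form with diagonal blocks $f(t),u(t),w(t)$ normalized to determinant one, uses that the stable piece $\alpha:H_1\ra (H_1^\vee)^*\otimes W$ has a proper orbit --- so by the valuative criterion of properness $f(t)$ and $w(t)$ converge in $\x{SL}(H_1)$ and $\x{SL}(H_1^\vee)$, while $u(t)$ converges by the inductive hypothesis --- and then computes $\beta_t$ and $\gamma_t$ explicitly to force convergence of the off-diagonal blocks $g(t),h(t),v(t)$, so that the limit of $\lambda(t)\cdot q$ visibly lies in the orbit. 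Note also that the base case $k=0$ (closedness of stable orbits) is itself a properness/valuative-criterion argument, not a formality. Without an argument of this kind, controlling all degenerations rather than only those adapted to the decomposition, your claim (a), and with it the identification of closed orbits with graded modules, remains unproven; you correctly flagged this as the main obstacle, but flagging it is not a proof.
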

\begin{proof}  The proof is the same as that of \cite{So} Proposition $2.5$. We prove it in two steps:
	\begin{enumerate}
		\item First we prove $\x{gr}(H,q)$ is in the closure of the orbit of $q$ by showing that there exists a $1-$parameter subgroup $\lambda$ of $\x{SL}(H)$ such that $\x{gr}(H,q)=\lim_{t\ra 0}\lambda(t)\cdot q$. We prove this by induction on $k$. If $k=0$, that's $(H,q)$ is stable, there is nothing to prove. Assume the result for $k-1$. Let $(H,q)$ be a semi-stable $\sigma-$quadratic module with a graded module of length $k$. Choose a \emph{minimal} totally $\sigma-$isotropic subspace $H_1\subset H_1^{\perp_\sigma}\subset H$. Let $H_2$ and $H_3$  be (any) complements of $H_1$ in $H_1^{\perp_\sigma}$ and $H_1^{\perp_\sigma}$ in $H$ respectively. Then we have the following decomposition of $q$ $$\begin{blockarray}{cccc}
		& H_1 & H_2 & H_3 \\
		\begin{block}{c(ccc)}
		H_1^*\otimes W & 0& 0 & \alpha \\
		H_2^*\otimes W & 0 & q' & \beta \\
		H_3^*\otimes W & \sigma^*\alpha^\vee & \sigma^*\beta^\vee & \gamma \\
		\end{block}
		\end{blockarray}\;,$$ 
		for some $\sigma-$quadratic module $q'$ on $H_2$ and some maps $\alpha, \beta$ and $\gamma$ (this last verifies  $\sigma^*\gamma^\vee=\gamma$, note also that $H_3\cong H_1^\vee$). Clearly the graded module associated to $q'$ is of length $k-1$ and we can apply the induction hypothesis  to obtain a $1-$parameter subgroup $\lambda'$ of $\x{SL}(H_2)$ such that $\lim_{t\ra 0}\lambda'(t)\cdot q'=\x{gr}(q')$.  Finally define $\lambda$ to be the $1-$ parameter subgroup of $\x{SL}(H)$ given by $$t\lra \begin{blockarray}{cccc}
		& H_1 & H_2 & H_3 \\
		\begin{block}{c(ccc)}
		H_1 & t & 0 & 0 \\
		H_2 & 0 & \lambda' & 0 \\
		H_3 & 0 & 0 & t^{-1} \\
		\end{block}
		\end{blockarray}\;.$$ 
		We see immediately that $\lim_{t\ra 0}\lambda(t)\cdot q=\x{gr}(H,q)$.  
		\item  We show here that the orbit of a $\sigma-$quadratic graded module $\x{gr}(H,q)$ is closed. Again we use induction on the length $k$. If $k=0$, then $q$ is stable. For every $1-$parameter subgroup of $\x{SL}(H)$, let $q_0=\lim_{t\ra 0}\lambda(t)\cdot q$. Since $q$ is stable, its orbit is proper. So by the valuative criterion of properness, we deduce that $q_0$ is in the orbit of $q$. Assume now the result for $k-1$,  let $\lambda$ be a $1-$parameter subgroup and assume that the limit $q_0=\lim_{t\ra 0}\lambda(t)\cdot q$ exists. Let $\x{gr}(H,q)=H_1\oplus H_1^\vee\oplus H'$. So the $\sigma-$quadratic form on $\x{gr}(H,q)$ can be written   $$\begin{blockarray}{cccc}
		& H_1 & H' & H_1^\vee \\
		\begin{block}{c(ccc)}
		H_1^*\otimes W & 0& 0& \alpha\\
		H'^*\otimes W& 0  & q'  & \beta \\
		(H_1^\vee)^*\otimes W & \sigma^*\alpha^\vee & \sigma^*\beta^\vee& \gamma \\
		\end{block}
		\end{blockarray}\;.$$ 
		Denote $H_1(t)=\lambda(t)(H_1)$,  $\alpha_t=\lambda(t)\cdot\alpha$, $\beta_t=\lambda(t)\cdot \beta$ and $\gamma_t=\lambda(t)\cdot\gamma$. The subspace $H_1(t)$ is totally $\sigma-$isotropic with respect to $q_t=\lambda(t)\cdot q$ and the module $H_1\ra (H_1^\vee)^*\otimes W$ is stable.  We can assume that $\lambda(t)$ (for all $t\in \bb C^*$) stabilizes $H_1$ and $H_1^{\perp_\sigma}=H_1\oplus H'$. Hence we can write  $\lambda(t)^{-1}$ in the form
		
		$$\begin{blockarray}{cccc}
		& H_1 & H' & H_1^\vee \\
		\begin{block}{c(ccc)}
		H_1 & f(t)& g(t)& h(t)\\
		H'& 0  & u(t)  & v(t) \\
		H_1^\vee & 0 & 0 & w(t) \\
		\end{block}
		\end{blockarray}\;.$$ 
		Moreover, without changing $q_t$, we can assume that $$\x{det}(f(t))=\x{det}(u(t))=\x{det}(w(t))=1.$$
		It follows that $\alpha_t=\,^tf(t)\alpha w(t)$. Since $\alpha$ is stable, and since $\alpha_t$ has a limit by assumption, it follows, by the valuative criterion of properness, that $f(t)$ and $w(t)$ have limits $f_0$ and $w_0$ in $\x{SL}(H_1)$ and $\x{SL}(H_1^\vee)$ respectively. Moreover, By the induction hypothesis, we deduce that $u(t)$ has a limit $u_0$ in $\x{SL}(H')$. Now, up to multiplication with the diagonal matrix whose entries are $f(t)$, $u(t)$ and $w(t)$, we can assume that entries of the diagonal of $\lambda(t)^{-1}$ are $1$.  Using that, we can explicitly calculate $\beta_t$ and $\gamma_t$ in function of $g(t)$, $h(t)$ and $v(t)$. Indeed we have $$\beta_t=q'\circ v(t)+g(t)\circ \alpha+\beta,$$ $$\gamma_t=\sigma^*\alpha^\vee\circ h(t)+v(t)\circ q'\circ v(t)+\sigma^*\beta^\vee\circ v(t)+h(t),$$ using again the properness, we deduce the existence of limits of $g(t)$, $h(t)$ and $v(t)$. This ends the proof.
		
	\end{enumerate}
\end{proof}

\section{Moduli spaces of anti-invariant vector bundles}
In this section, we construct the moduli space of $\sigma-$symmetric anti-invariant vector bundles. 

\subsection{Semistability of anti-invariant vector bundles}
Let $(E,\psi)$ be an anti-invariant vector bundle over $X$. We say that a subbundle $F$ of $E$ is $\sigma-$isotropic if the induced map $\psi:\sigma^*F\ra F^*$ is identically zero. 
\begin{defi} Let $(E,\psi)$ be an anti-invariant vector bundle over $ X$. We say that it is \emph{semi-stable} (resp. \emph{stable}) if for every $\sigma-$isotropic sub-bundle $F$ of $E$, one has $$\mu(F)\leqslant 0 \;(\text{resp. } \mu(F)<0).$$
\end{defi}

\begin{prop}\label{ssequiv}  $(E,\psi)$ is semi-stable if and only if $E$ is semi-stable vector bundle.
\end{prop}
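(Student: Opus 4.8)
The plan is to prove the two implications separately, exploiting the anti-invariance isomorphism $\psi:\sigma^*E\ra E^*$ to relate $\sigma$-isotropic subbundles (the objects controlling stability of the pair $(E,\psi)$) to ordinary subbundles (controlling stability of $E$ as a vector bundle). The key structural observation I would use throughout is that $\psi$ sets up a duality: for any subbundle $F\subset E$, the isomorphism $\psi$ identifies $\sigma^*E$ with $E^*$, so a subbundle $F$ gives rise to a ``$\sigma$-orthogonal'' subbundle, and $F$ is $\sigma$-isotropic precisely when the composite $\sigma^*F\hookrightarrow\sigma^*E\xrightarrow{\psi}E^*\twoheadrightarrow F^*$ vanishes, i.e.\ when $F\subset (\sigma^*F)^{\perp}$ under the pairing. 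Note also that since $\sigma^*E\cong E^*$, one has $\deg E = -\deg(\sigma^*E) = -\deg E$ (as $\sigma$ is an automorphism preserving degree), forcing $\deg E = 0$, so $\mu(E)=0$; this is why the stability thresholds are phrased as $\mu(F)\lessgtr 0$ rather than $\mu(F)\lessgtr\mu(E)$.

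For the easy direction, if $E$ is semi-stable as a vector bundle then every subbundle $F$ satisfies $\mu(F)\leqslant\mu(E)=0$; in particular every $\sigma$-isotropic subbundle satisfies $\mu(F)\leqslant 0$, so $(E,\psi)$ is semi-stable as an anti-invariant bundle. This implication requires nothing beyond $\mu(E)=0$.

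The substantive direction is the converse: assuming $(E,\psi)$ is semi-stable as an anti-invariant pair, I must show $E$ is semi-stable as a bundle. The natural strategy is contrapositive: suppose $E$ is \emph{not} semi-stable, and let $F\subset E$ be its maximal destabilizing subbundle, so $\mu(F)>0$ and $F$ is the first step of the Harder--Narasimhan filtration (hence $F$ is itself semi-stable with $\mu(F)=\mu_{\max}(E)$). I would then produce from $F$ a $\sigma$-isotropic subbundle of positive slope, contradicting anti-invariant semi-stability. The mechanism is to push $F$ through the duality: consider $\sigma^*F\subset\sigma^*E\cong E^*$; dualizing, the quotient $E\twoheadrightarrow(\sigma^*F)^*$ has a kernel $G:=(\sigma^*F)^{\perp}$, a subbundle of $E$ with $\mu(G)=\mu(\sigma^*F)=\mu(F)>0$ by the degree/rank bookkeeping (using $\deg\sigma^*F=\deg F$ since $\sigma$ is an automorphism). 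The crux is to analyze the subbundle $F\cap G$ (or the image of $F$ in the appropriate quotient) and show that the uniqueness/maximality of the Harder--Narasimhan subbundle forces $F$ and $G$ to be comparable, and in fact that a suitable one among $F$, $G$, or $F\cap G$ is genuinely $\sigma$-isotropic of positive slope.

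\textbf{The main obstacle} I anticipate is precisely this last point: the maximal destabilizing subbundle $F$ need not itself be $\sigma$-isotropic, so one cannot feed it directly into the definition of anti-invariant semi-stability. The right tool is the uniqueness of the Harder--Narasimhan filtration together with its $\sigma$-equivariance: because $\psi$ identifies $\sigma^*E$ with $E^*$, the HN filtration of $E$ and that of $\sigma^*E\cong E^*$ must correspond, and the restriction $\psi|_{\sigma^*F}:\sigma^*F\ra F^*$ is a map from the maximal destabilizing piece of $\sigma^*E$ (slope $\mu_{\max}$) to $F^*$ whose maximal destabilizing slope is $-\mu_{\min}(F)=-\mu(F)<0$; since a nonzero map between semistable bundles forces $\mu(\text{source})\leqslant\mu(\text{target})$, and here $\mu(\sigma^*F)=\mu(F)>0>-\mu(F)$, the map $\psi|_{\sigma^*F}$ must vanish. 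This vanishing is exactly the statement that $F$ is $\sigma$-isotropic, giving a $\sigma$-isotropic subbundle of slope $\mu(F)>0$ and the desired contradiction. I would carry out the argument in this order: establish $\mu(E)=0$; dispatch the easy direction; set up the HN/duality correspondence; and finally invoke the semistability-of-HN-pieces slope inequality to force the destabilizing subbundle to be $\sigma$-isotropic.
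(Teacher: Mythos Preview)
Your argument is correct, but it proceeds by a genuinely different route from the paper's.

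The paper, for the nontrivial direction, takes an \emph{arbitrary} subbundle $F\subset E$ and constructs from it a $\sigma$-isotropic subbundle of the same degree: one forms the $\sigma$-orthogonal $F^{\perp_\sigma}=\ker(E\cong\sigma^*E^*\twoheadrightarrow\sigma^*F^*)$ (which has $\deg F^{\perp_\sigma}=\deg F$), then lets $N$ be the saturation of $F\cap F^{\perp_\sigma}$. A short lattice argument shows $N$ is $\sigma$-isotropic and that the image $M$ of $F\oplus F^{\perp_\sigma}$ in $E$ equals $N^{\perp_\sigma}$; comparing the two exact sequences relating $N$, $M$, $F$, $F^{\perp_\sigma}$ yields $\deg N=\deg F$. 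Anti-invariant semistability then gives $\deg F=\deg N\leqslant 0$.

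You instead pass to the contrapositive and invoke the Harder--Narasimhan filtration: the maximal destabilizing subbundle $F$ is semistable of slope $\mu(F)>0$, and the composite $\sigma^*F\hookrightarrow\sigma^*E\xrightarrow{\psi}E^*\twoheadrightarrow F^*$ is a map between semistable bundles with $\mu(\sigma^*F)=\mu(F)>0>-\mu(F)=\mu(F^*)$, hence vanishes; so $F$ itself is $\sigma$-isotropic of positive slope, a contradiction. This is cleaner and avoids the saturation/lattice computation, at the cost of importing the HN machinery. The paper's construction, on the other hand, works for \emph{every} subbundle (not just the maximal destabilizer) and produces an explicit $\sigma$-isotropic companion $N$ with $\deg N=\deg F$, which is a slightly stronger statement and dovetails with the isotropic filtration built immediately afterwards. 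A minor stylistic remark: your paragraph about $G=(\sigma^*F)^{\perp}$ and analyzing $F\cap G$ is a detour that you ultimately abandon for the slope-inequality argument; you could excise it.
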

\begin{proof}
	We follow the same lines of the proof of \cite{R} $4.2$, page $155$. 
	
	The "if " part is obvious.
	Conversely, take $F$ to be any sub-bundle of $E$. Define $F^{\perp_\sigma}$ to be the kernel of the surjective morphism:
	$$E\stackrel{\sim}{\longrightarrow}\sigma^* E^*\twoheadrightarrow \sigma^* F^*.$$
	Note that $F^{\perp_\sigma}$ have the same degree as $F$, and $F$ is $\sigma-$isotropic if and only if $F\subset F^{\perp_\sigma}$.\\  
	Then, the sub-bundle $N$ of $E$ generated by $F\cap F^{\perp_\sigma}$ is $\sigma-$isotropic. Indeed, we have $N\subset F$, so $F^{\perp_\sigma}\subset N^{\perp_\sigma}$, interchanging $F$ and $F^{\perp_\sigma}$ we get $F\subset N^{\perp_\sigma}$, hence $N\subset N^{\perp_\sigma}$. Let $M$ be the image of $F\oplus F^{\perp_\sigma}$ in $E$. We have  $M=N^{\perp_\sigma}$, to see this, note that $N^{\perp_\sigma}$ contains $F$ and $F^{\perp_\sigma}$, so it contains $M$, but this two bundles have the same rank.   Moreover we  have $$0\ra N\ra F\oplus F^{\perp_\sigma}\ra M\ra 0,$$ which implies also 
	$$0\ra M^{\perp_\sigma}\ra F\oplus F^{\perp_\sigma}\ra N^{\perp_\sigma}\ra 0,$$ we deduce that they have the same degree too. Hence they are equal.\\
	Therefore, $\text{deg}(N)=\text{deg}(F)$, but $\text{deg}(N)\leq 0$ because it is $\sigma-$isotropic and $(E,\psi)$ is semi-stable by hypothesis, so $E$ is semi-stable as a vector bundle.
\end{proof}

Let $E$ be a semistable $\sigma-$symmetric anti-invariant vector bundle, the following Lemma generalizes the isotropic filtration of self-dual vector bundle. 
\begin{lemm}
	There exists a filtration of $E$ of the form 
	$$0=F_0\subset F_1\subset\cdots\subset F_k\subseteq F_k^{\perp_\sigma}\subset F_{k-1}^{\perp_\sigma}\subset\cdots\subset F_0^{\perp_\sigma}=E,$$
	where $F_i$ are degree $0$ sub-bundles of $E$ (which are of course $\sigma-$isotropic) such that $F_i/F_{i-1}$ is stable vector bundle of rank $\geqslant 1$ for $i=1,\dots,k$. 
\end{lemm}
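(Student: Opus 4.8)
The plan is to argue by induction on the rank of $E$, mimicking the construction of the totally $\sigma$-isotropic filtration of a $\sigma$-quadratic module carried out in Section~\ref{app} just before Proposition~\ref{sequiv}, but replacing ``dimension'' by ``slope'' throughout. Note first that anti-invariance already forces $\deg E = 0$: from $\sigma^* E \cong E^*$ one gets $\deg E = \deg \sigma^* E = \deg E^* = -\deg E$. Hence by Proposition~\ref{ssequiv} the bundle $E$ is semistable of slope $0$, and, exactly as in the proof of that proposition, for any subbundle $F$ the orthogonal $F^{\perp_\sigma}$ satisfies $\mathrm{rk}(F^{\perp_\sigma}) = r - \mathrm{rk}(F)$ and $\deg(F^{\perp_\sigma}) = \deg(F)$; in particular a degree-$0$ subbundle $F$ is $\sigma$-isotropic precisely when $F \subseteq F^{\perp_\sigma}$.

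If $E$ admits no nonzero proper $\sigma$-isotropic subbundle of degree $0$, I take $k=0$ and the filtration degenerates to $0 = F_0 \subseteq F_0^{\perp_\sigma}=E$. Otherwise I choose, among all nonzero $\sigma$-isotropic subbundles of degree $0$, one of minimal rank, $F_1$. I claim $F_1$ is stable: as a degree-$0$ subbundle of the slope-$0$ semistable bundle $E$ it is itself semistable of slope $0$, and were it not stable it would contain a proper nonzero saturated subbundle $G$ of degree $0$; then from $G \subseteq F_1 \subseteq F_1^{\perp_\sigma}\subseteq G^{\perp_\sigma}$ this $G$ is again $\sigma$-isotropic of degree $0$, contradicting the minimality of $\mathrm{rk}(F_1)$. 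This yields the first graded piece $F_1/F_0 = F_1$, stable of rank $\geqslant 1$.

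Next I pass to the reduced bundle $E' := F_1^{\perp_\sigma}/F_1$, of degree $0$ and rank $r - 2\,\mathrm{rk}(F_1) < r$. The key structural point is that $\psi$ descends to $E'$: since $F_1$ is isotropic, $\psi$ pairs $F_1$ trivially against $F_1^{\perp_\sigma}$ and therefore induces a well-defined nondegenerate $\sigma$-symmetric isomorphism $\psi' : \sigma^* E' \to E'^*$, just as the reduced $\sigma$-quadratic form on $V^{\perp_\sigma}/V$ in Section~\ref{app}. The bundle $E'$ is again semistable: any subbundle $S \subseteq E'$ lifts to $\tilde S$ with $F_1 \subseteq \tilde S \subseteq F_1^{\perp_\sigma}\subseteq E$, and $\mu(\tilde S)\leqslant 0$ gives $\deg S = \deg \tilde S \leqslant 0$, hence $\mu(S)\leqslant 0$. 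I also record the compatibility of orthogonals with reduction: for $F_1 \subseteq \tilde S \subseteq F_1^{\perp_\sigma}$ the orthogonal in $(E',\psi')$ of $S = \tilde S/F_1$ is the image of $\tilde S^{\perp_\sigma}$, so isotropy and orthogonality computed in $E'$ pull back to $E$.

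By the induction hypothesis $E'$ carries a filtration $0 = \bar F_0 \subset \cdots \subset \bar F_{k-1} \subseteq \bar F_{k-1}^{\perp_\sigma}\subset\cdots\subset E'$ with stable graded pieces. Taking preimages in $F_1^{\perp_\sigma}$ under the quotient map produces $\sigma$-isotropic degree-$0$ subbundles $F_{i+1}\subseteq E$ with $F_{i+1}/F_i \cong \bar F_i/\bar F_{i-1}$ stable, giving the ascending isotropic chain $F_1 \subset \cdots \subset F_k \subseteq F_k^{\perp_\sigma}$; applying $\perp_\sigma$, which reverses inclusions, returns the descending chain $F_k^{\perp_\sigma}\subset\cdots\subset F_0^{\perp_\sigma}=E$. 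The main obstacle is not the induction but this middle step: verifying that the descended form $\psi'$ is a genuine nondegenerate $\sigma$-symmetric structure and that $\perp_\sigma$ commutes with passage to $F_1^{\perp_\sigma}/F_1$. These are the sheaf-theoretic analogues of the linear identities used in Section~\ref{app}, and once they are in place the filtration assembles automatically.
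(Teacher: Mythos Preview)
Your argument is correct and follows essentially the same route as the paper's proof: pick a nonzero $\sigma$-isotropic degree-$0$ subbundle $F_1$ of minimal rank, observe that minimality forces $F_1$ to be stable, then pass to the reduced anti-invariant bundle and iterate. The only cosmetic difference is that you organize this as a formal induction on the rank and explicitly verify that $\psi$ descends to $F_1^{\perp_\sigma}/F_1$ (the paper writes ``repeat on $E/F_1$'' and leaves these checks implicit), so your write-up is in fact a bit more careful than the original.
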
 
\begin{proof} The proof is similar to that of  Lemma $1.9$ of \cite{GH}.\\ 
	The proof is a constructive one, we consider the set of all $\sigma-$isotropic subbundles of $E$, which contains $0$ and $E$. If $E$ is stable anti-invariant vector bundle, then it has no $\sigma-$isotropic proper sub-bundle of degree $0$, and the filtration is $0\subset0^{\perp_\sigma}=E$. Otherwise, let $F_1$ be  a $\sigma-$isotropic sub-bundle of $E$ of degree $0$ and smallest rank (it is a stable vector bundle, because otherwise, a proper sub-bundle of $F_1$ of degree $0$ would be a $\sigma-$isotropic sub-bundle of $E$, contradicting the minimality of $\x{rk}(F_1)$). Now, we repeat this procedure on $E/F_1$ instead of $E$.
\end{proof}
\begin{lemm} Consider the above filtration, then we have $$\sigma^* (F_{i-1}^{\perp_\sigma}/F_i^{\perp_\sigma})\cong\left(F_i/F_{i-1}\right)^*,\;\,\sigma^*(F_k^{\perp_\sigma}/F_k)\cong (F_{k}^{\perp_\sigma}/F_k)^*,$$
	for $i=1,\dots,k$.
\end{lemm}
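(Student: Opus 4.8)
The plan is to transport everything through the fixed isomorphism $\phi:=\sigma^*\psi:E\stackrel{\sim}{\lra}\sigma^*E^*$ coming from the anti-invariant structure, and to record first that, by the very definition used in Proposition~\ref{ssequiv}, one has $F^{\perp_\sigma}=\phi^{-1}(\sigma^*F^\perp)$, where $F^\perp=\ker(E^*\twoheadrightarrow F^*)$ is the ordinary annihilator. Since $\phi$ is an isomorphism of bundles and each $F_i$ is a subbundle, $\phi$ restricts to isomorphisms $F_i^{\perp_\sigma}\stackrel{\sim}{\ra}\sigma^*F_i^\perp$ for every $i$, compatibly with the inclusions of the filtration. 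All the graded pieces have degree $0$, since $\det E=\al O_X$ forces $\deg F_i^{\perp_\sigma}=-\deg E+\deg F_i=0$.

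For the first isomorphism, note that $F_{i-1}\subset F_i$ gives $F_i^\perp\subset F_{i-1}^\perp$, so $\phi$ induces
$$F_{i-1}^{\perp_\sigma}/F_i^{\perp_\sigma}\stackrel{\sim}{\lra}\sigma^*F_{i-1}^\perp/\sigma^*F_i^\perp=\sigma^*\left(F_{i-1}^\perp/F_i^\perp\right),$$
using exactness of $\sigma^*$. It then remains to invoke the elementary duality $F_{i-1}^\perp/F_i^\perp\cong(F_i/F_{i-1})^*$, obtained by dualizing $0\ra F_{i-1}\ra F_i\ra F_i/F_{i-1}\ra 0$ and identifying $F_j^*=E^*/F_j^\perp$. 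Combining these and applying $\sigma^*$ (which squares to the identity) yields $\sigma^*(F_{i-1}^{\perp_\sigma}/F_i^{\perp_\sigma})\cong(F_i/F_{i-1})^*$. This part is purely formal once $F^{\perp_\sigma}=\phi^{-1}(\sigma^*F^\perp)$ is in hand.

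For the middle isomorphism, write $G=F_k^{\perp_\sigma}$ and $F=F_k$, and consider the composite $\phi_G:G\hookrightarrow E\xrightarrow{\phi}\sigma^*E^*\ra\sigma^*G^*$. Two observations drive the argument: since every element of $G=F^{\perp_\sigma}$ pairs to zero with $\sigma^*F$, the image of $\phi_G$ lands in $\sigma^*(G/F)^*$; and the $\sigma$-symmetry ${}^t\psi=\sigma^*\psi$, handling the two $\sigma$-twisted slots of the pairing, shows that $\phi_G$ vanishes on the isotropic subbundle $F$. Hence $\phi_G$ descends to a morphism $\bar\phi:G/F\ra\sigma^*\left((G/F)^*\right)$, which is exactly the reduced $\sigma$-symmetric form on the middle quotient.

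The remaining, and main, point is to show that $\bar\phi$ is an isomorphism of bundles; I would argue by a rank-and-degree count rather than fiberwise. Source and target both have rank $\x{rk}(G)-\x{rk}(F)$, and by the degree computation above $\deg(G/F)=0=\deg\sigma^*((G/F)^*)$. At the generic point of $X$ the data $(E,\psi)$ is a nondegenerate $\sigma$-form with $F$ isotropic, so the reduced form on $G/F$ is nondegenerate; thus $\bar\phi$ is generically an isomorphism, hence injective as a sheaf map with torsion cokernel of length $\deg\sigma^*((G/F)^*)-\deg(G/F)=0$, forcing $\bar\phi$ to be an isomorphism. Applying $\sigma^*$ then gives $\sigma^*(F_k^{\perp_\sigma}/F_k)\cong(F_k^{\perp_\sigma}/F_k)^*$. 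I expect the genuine obstacle to be exactly this last step: upgrading generic nondegeneracy of the reduced form to an isomorphism of vector bundles, where the triviality of $\det E$ (via the degree count) is essential, with the careful bookkeeping of the twisted slots in $F\subseteq\ker\phi_G$ being the other place the symmetric hypothesis is really used.
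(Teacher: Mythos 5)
Your proof is correct, but it takes a genuinely different route from the paper's. The paper argues by a diagram chase: from the exact sequence $0\ra F_{i-1}^{\perp_\sigma}\ra E\ra \sigma^*F_{i-1}^*\ra 0$ it extracts a natural \emph{nonzero} map $F_{i-1}^{\perp_\sigma}/F_i^{\perp_\sigma}\ra \sigma^*(F_i/F_{i-1})^*$, and then uses stability of the graded pieces $F_i/F_{i-1}$ (built into the construction of the filtration), together with equality of ranks and degrees, to promote a nonzero map to an isomorphism; the middle piece is dispatched with ``the same argument as before.'' You instead transport everything through the bundle isomorphism $\phi=\sigma^*\psi$: since $F^{\perp_\sigma}=\phi^{-1}(\sigma^*F^\perp)$ by the very definition of $\perp_\sigma$, the isomorphism $\phi$ identifies $F_{i-1}^{\perp_\sigma}/F_i^{\perp_\sigma}$ with $\sigma^*(F_{i-1}^\perp/F_i^\perp)\cong\sigma^*\bigl((F_i/F_{i-1})^*\bigr)$ outright, with no stability and no nonvanishing argument -- purely formal. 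For the middle piece you use reflexivity $F_k\subset (F_k^{\perp_\sigma})^{\perp_\sigma}$ (correctly flagged as the place where $\sigma$-symmetry enters) to descend $\phi$ to the reduced form on $F_k^{\perp_\sigma}/F_k$, and conclude by nondegeneracy plus a degree count. What each approach buys: the paper's is short once one accepts that a nonzero map from a semistable bundle to a stable bundle of the same slope and rank is an isomorphism, but it is weakest exactly at the middle piece, where $F_k^{\perp_\sigma}/F_k$ is only stable \emph{as an anti-invariant bundle}, not necessarily stable as a vector bundle, so the ``same argument'' needs more care there; your argument handles that case cleanly, and in fact, since $\psi$ is an isomorphism of bundles and $F_k$, $F_k^{\perp_\sigma}$ are subbundles, the reduced form is nondegenerate on \emph{every} fibre (a two-point linear-algebra check at $x$ and $\sigma(x)$), so even the generic-point-plus-degree bookkeeping can be replaced by a fibrewise statement.

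Two small blemishes, neither affecting correctness. First, you invoke $\det E=\al O_X$, which is not assumed in this section; it is also not needed, since anti-invariance forces $\deg E=0$ (from $\sigma^*E\cong E^*$), the $F_i$ have degree $0$ by construction, and then $0\ra F_i^{\perp_\sigma}\ra E\ra\sigma^*F_i^*\ra 0$ gives $\deg F_i^{\perp_\sigma}=\deg E+\deg F_i=0$ (note the sign: plus, not minus, in your formula). Second, as remarked above, ``generically nondegenerate'' can and should be stated as nondegenerate everywhere, which makes the torsion-cokernel step superfluous.
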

\begin{proof}
	For $i=1$, this is just the definition of $F_1^{\perp_\sigma}$. Let $i>1$, and consider $$0\subset F_{i-1}\subset F_{i}\subset F_{i}^{\perp_\sigma}\subset F_{i-1}^{\perp_\sigma}\subset E.$$
	We have a commutative diagram $$\xymatrix{0\ar[r]& F_{i-1}^{\perp_\sigma}\ar[r]^i& E\ar[r]\ar[rd]^{p_1}& \sigma^*F_{i-1}^*\ar[r]&0 \\ & & & \sigma^*F_{i}^*\ar[u]^{p_2}& \\ & & &  \sigma^*(F_i/F_{i-1})^*\ar@{^{(}->}[u]&.}$$ Since the composition $p_2\circ p_1\circ i$ is identically zero, it follows that $p_1\circ i$ factorizes through $\sigma^*(F_i/F_{i-1})^*$. The resulting map $F_{i-1}^{\perp_\sigma}\ra\sigma^*(F_i/F_{i-1})^*$ is nonzero map because otherwise $F_{i-1}^{\perp_\sigma}\subset F_{i}^{\perp_\sigma}$, thus $F_i/F_{i-1}=0$ which contradicts the definition of the above filtration. Its kernel contains $F_{i}^{\perp_\sigma}$, so we obtain a nonzero map $$F_{i-1}^{\perp_\sigma}/F_{i}^{\perp_\sigma}\ra\sigma^*(F_i/F_{i-1})^*.$$
	But this two bundles are stable of the same rank and degree, so the last map has to be an isomorphism. \\ 
	For $i=k$, we have a nonzero map $$F_{k}^{\perp_\sigma}/F_{k}\ra\sigma^*(F_k^{\perp_\sigma}/F_{k})^*,$$ otherwise $F_k=F_{k}^{\perp_\sigma}$. So the same argument as before gives the result.
\end{proof}
The above lemma proves that the bundle $$\x{gr}^\sigma(E)=\bigoplus_{i=1}^k\left(F_i/F_{i-1}\oplus F_{i-1}^{\perp_\sigma}/F_i^{\perp_\sigma}\right)\oplus (F_{k}^{\perp_\sigma}/F_k) $$
is an anti-invariant vector bundle. Moreover, it is $\sigma-$symmetric (resp. $\sigma-$alternating) if $E$ is $\sigma-$symmetric (resp. $\sigma-$alternating). 
\begin{defi} The vector bundle $\x{gr}^\sigma(E)$ is called the \emph{$\sigma-$graded bundle} associated to $(E,\psi)$. Two $\sigma-$symmetric or $\sigma-$alternating anti-invariant vector bundles $E$ and $F$ are said to be \emph{$S-$equivalent} if their associated $\sigma-$graded bundles  $\x{gr}^\sigma(E)$ and $\x{gr}^\sigma(F)$ are isomorphic.\\  
\end{defi}
\begin{exam}
	We give an example of two non-isomorphic  $\sigma-$symmetric anti-invariant vector bundles which are $S-$equivalent. Let $M$ be an element of $\x{Prym}_{X/Y}$, and $\phi:\sigma^*M\stackrel{\sim}{\lra} M^*$. The vector bundle $M^{\oplus2}$ with the $\sigma-$symmetric isomorphism $$\psi=\begin{pmatrix}
	0 & \phi \\ \phi & 0
	\end{pmatrix}$$ is a $\sigma-$symmetric anti-invariant vector bundle. Now for $\eta\in \x{Ext}^1(M,M)_-\cong H^1(X,\al O_X)_-$, where the involution on this vector space is given by pullback by $\sigma$, consider the associated extension of $M$ by $M$ $$0\ra M\ra E\ra M\ra 0.$$ Note that in rank $2$ taking the dual does not change the extension class in $H^1(X,\al O_X)$ because of the formula $E^*\cong E\otimes \x{det}(E)^{-1}$. \\
	Since $\eta$ is a $-1$ eigenvector, $E$ is anti-invariant. Indeed,  by pulling back by $\sigma$ we get the extension $$\xymatrix{0\ar[r] &\sigma^*M\ar[r]\ar[d]^\simeq & \sigma^*E\ar[r]\ar[d]^\simeq & \sigma^*M\ar[r]\ar[d]_\simeq & 0 \\  0\ar[r] &M^{-1}\ar[r] & E\otimes M^{-2}\ar[r] & M^{-1}\ar[r] & 0.} $$  But $E\otimes M^{-2}$ is  isomorphic to $E^*$.  Moreover, if $\eta\not=0$ then $E$ is not isomorphic to $M^{\oplus 2}$. However, clearly $E$ and $M^{\oplus2}$ are $S-$equivalent as $\sigma-$symmetric anti-invariant vector bundles.\\
\end{exam}

\subsection{Construction of the moduli space}\label{constrcuction}
We follow the method of \cite{So} to construct this moduli space. \\
\subsubsection{$\sigma-$symmetric case}

Fix an ample $\sigma$-linearized line bundle $(\al O(1),\eta)$ of degree $1$ over $X$ (in the \'etale case, there are no such bundle, so one has to take degree $2$ instead of degree $1$, but this doesn't produce any difference). Let $\nu$ be some big integer such that for any semi-stable coherent sheaf $E$ over $X$ of rank $r$ and degree $0$, we have $H^1(X,E(\nu))=0$  and $E(\nu)$ is generated by global sections. Let $\sr F=\al O_X^m(-\nu)$  where $m=r\nu+r(1-g_X)$. Denote $H=\bb C^{m}$.\\

Consider the functor $$\text{Quot}^\sigma:(\text{algebraic varieties})\ra(\text{sets})$$ which associates to a variety $T$ the set of isomorphism classes of  $(E,q,\overline{\phi})$, where $E$ is coherent quotient sheaf $q:p_1^*\sr F\ra E$ over $X\times T$  flat over $T$, and $\overline{\phi}$ is class, modulo $\bb C^*$, of $\sigma-$symmetric isomorphism $\sigma^* E\cong E^*$ ($\sigma$ acts only on $X$), such that, for each $t\in T$, $E_t$ is a semi-stable, $\sigma-$symmetric and locally free  of rank $r$ and $q$ induces an isomorphism $H\ra H^0(X, E_t(\nu))$. Two triplets $(E,q,\overline{\phi})$ and $(F,p,\overline{\psi})$ are isomorphic if there exists an isomorphism $f:E\ra F$ such that $p=f\circ q$ and $\psi\circ \sigma^*f=\,^tf^{-1}\circ \phi$ (for some $\phi\in\overline{\phi}$ and $\psi\in \overline{\psi}$). 

Let $[E,q,\overline{\phi}]\in \x{Quot}^\sigma(\bb C)$, consider the diagram $$\xymatrix{ H\otimes\al  O_{X}\ar[r]^{\sigma^*q}&   \sigma^*E(\nu) \ar[r]\ar[d]^\phi & 0 \\  0\ar[r] & E^*(\nu)\ar[r]^{^tq\;\;\;\;\;}& H^*\otimes \al O_{X}(2\nu)\,.}$$
The composition $h=\,^tq\circ\phi\circ\sigma^*q$ gives, at the level of global sections, a $\sigma-$quadratic  form $H\ra H^*\otimes W$, where $W=H^0(X,\al O_{X}(2\nu))$ with an involution induced by the linearization on $\al O(1)$. Hence we get a point $\overline{h}\in P(H,W)^\sigma$. This actually defines a transformation $\ak H:\x{Quot}^\sigma\lra P(H,W)^\sigma$, where $P(H,W)^\sigma$ is seen as a functor by associating to a variety $T$ the space $P(H_T,W_T)^\sigma$, where $H_T=H^0(X\times T,\al O_{X\times T}^{\oplus m})$ and $W_T=H^0(X\times T,\al O_{X\times T}(2\nu))$.\\ 
\begin{prop} \label{ssequiv2} Let  $(E,\psi)$ be a $\sigma-$symmetric vector bundle, and $h$ its corresponding point of $\Gamma(H,W)^\sigma$, then the following are equivalent: 
	\begin{itemize}
		\item[(a)] The bundle $E$ is semi-stable.
		\item[(b)] $h$ is semi-stable with respect to the action of $\x{SL}(H)$.
	\end{itemize} 
	Moreover, $(E,\psi)$ is stable if and only if $h$ is stable.
\end{prop}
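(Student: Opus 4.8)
The plan is to reduce the statement about the semistability of the anti-invariant vector bundle $(E,\psi)$ to the semistability of the associated $\sigma$-quadratic form, which by Proposition \ref{ssqad} is in turn equivalent to the GIT-semistability of $[h]\in P(H,W)^\sigma$. The key link is that the $\sigma$-isotropic subbundles of $E$ should correspond, via the space of sections, to the totally $\sigma$-isotropic subspaces of $(H,h)$. So the first step is to establish a precise dictionary between subbundles $F\subset E$ and the subspaces $H^0(X,F(\nu))\subset H^0(X,E(\nu))=H$. By the choice of $\nu$ (large enough that $H^1$ vanishes and $E(\nu)$ is globally generated), evaluation of sections is surjective, and one controls $\dim H^0(X,F(\nu))$ in terms of $\mathrm{rk}(F)$ and $\deg(F)$ via Riemann--Roch; here one uses that $E$ is semistable so that subbundles satisfy the boundedness needed to kill the $H^1$ term.

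Next I would verify that the $\sigma$-quadratic form $h=\,^tq\circ\phi\circ\sigma^*q$ carries the orthogonality structure of $\psi$. Concretely, if $F\subset E$ is a $\sigma$-isotropic subbundle, meaning the induced $\psi:\sigma^*F\ra F^*$ vanishes, then the subspace $V=H^0(X,F(\nu))\subset H$ should be totally $\sigma$-isotropic for $h$, and conversely $V^{\perp_\sigma}$ should be recovered as the sections of $F^{\perp_\sigma}(\nu)$, where $F^{\perp_\sigma}$ is the orthogonal subbundle defined in the proof of Proposition \ref{ssequiv}. The numerical identity to check is that $\dim V+\dim V^{\perp_\sigma}\leqslant\dim H$ is equivalent to $\deg(F)\leqslant 0$ (equivalently $\mu(F)\leqslant 0$ since all the relevant $F$ have degree-zero normalization), using Riemann--Roch to translate the dimension count into a degree count, and the fact that $F$ and $F^{\perp_\sigma}$ have the same degree. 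This is the computational heart of the argument.

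For the direction (a) $\Rightarrow$ (b), one uses Proposition \ref{ssequiv} to replace semistability of $(E,\psi)$ by semistability of $E$ as a vector bundle; then for any totally $\sigma$-isotropic subspace $V\subset H$ one takes the subbundle $F\subset E$ generated by the corresponding sections, shows $F$ is $\sigma$-isotropic, and deduces $\mu(F)\leqslant 0$, hence the dimension inequality for $V$, giving semistability of $h$ by Proposition \ref{ssqad}. For (b) $\Rightarrow$ (a), given a $\sigma$-isotropic subbundle $F$ of degree $>0$ one produces from $V=H^0(X,F(\nu))$ a destabilizing totally $\sigma$-isotropic subspace of $(H,h)$, contradicting semistability of $h$; the stable case is identical with strict inequalities throughout.

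The main obstacle I anticipate is the subbundle-to-subspace correspondence being only a bijection on the relevant (degree-zero, globally generated) part: a general subspace $V\subset H$ need not equal $H^0$ of the subsheaf it generates, and the subsheaf generated by $V$ need not be saturated. One must argue that passing from $V$ to the saturated subbundle $F$ it generates can only increase degree and does not decrease $\dim V$ relative to $\dim H^0(F(\nu))$, so that the destabilizing inequality is preserved in the right direction; the choice of $\nu$ guaranteeing $H^1(X,E(\nu))=0$ for all semistable $E$ of the given rank and degree is exactly what makes this dimension bookkeeping work, and handling the non-saturated and non-globally-generated cases carefully is where the real care is needed. This is precisely the point where I would follow the argument of \cite{So} most closely.
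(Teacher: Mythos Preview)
Your overall strategy is the paper's strategy, and your treatment of the implication (b) $\Rightarrow$ (a) matches it closely: starting from a $\sigma$-isotropic subbundle $F$, setting $V=H^0(F(\nu))$ and $V'=H^0(F^{\perp_\sigma}(\nu))$, one checks $V'\subset V^{\perp_\sigma}$, and then Riemann--Roch together with $\deg(F)=\deg(F^{\perp_\sigma})$ and $\x{rk}(F)+\x{rk}(F^{\perp_\sigma})=r$ turns $\dim V+\dim V^{\perp_\sigma}\leqslant\dim H$ into $\deg(F)\leqslant0$.

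The gap is in your plan for (a) $\Rightarrow$ (b). You propose: generate a subsheaf $F$ from a totally $\sigma$-isotropic $V$, observe that $F$ is $\sigma$-isotropic, conclude $\deg(F)\leqslant0$, and then read off the dimension inequality because ``$\dim V+\dim V^{\perp_\sigma}\leqslant\dim H$ is equivalent to $\deg(F)\leqslant0$''. That equivalence is what works in the other direction, where one \emph{starts} from $F$ and has $V=H^0(F(\nu))$, $V^{\perp_\sigma}\supset H^0(F^{\perp_\sigma}(\nu))$; here you start from $V$, and the containment $H^0(F^{\perp_\sigma}(\nu))\subset V^{\perp_\sigma}$ goes the wrong way to bound $\dim V^{\perp_\sigma}$ from above. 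Knowing only $\deg(F)\leqslant0$ does not control $\dim V^{\perp_\sigma}$.

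The paper fixes this not by using $\deg(F)\leqslant0$ at all, but by introducing a \emph{second} subsheaf $F'$, the one generated by $V^{\perp_\sigma}$. From $h(V,V^{\perp_\sigma})=0$ one gets $F'\subset F^{\perp_\sigma}$, hence $\x{rk}(F)+\x{rk}(F')\leqslant r$. Then, using only that $E$ is semistable as a vector bundle (via Proposition~\ref{ssequiv}), one invokes the Le Potier section bound (\cite{Po}, Proposition~7.1.1): for every subsheaf $G\subset E$ one has $h^0(G(\nu))/\x{rk}(G)\leqslant h^0(E(\nu))/r$. Applying this to $F$ and to $F'$ and summing gives
\[
\dim V+\dim V^{\perp_\sigma}\leqslant h^0(F(\nu))+h^0(F'(\nu))\leqslant\frac{\x{rk}(F)+\x{rk}(F')}{r}\,h^0(E(\nu))\leqslant\dim H.
\]
So the missing ingredient in your sketch is precisely this pair $(F,F')$ together with the section inequality from \cite{Po}; showing $F$ is $\sigma$-isotropic is not needed, and $\deg(F)\leqslant0$ alone is not enough.
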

\begin{proof}
	Assume that $(E,\psi)$ is semi-stable,  let $V\subset H$ be a totally $\sigma-$isotropic. Denote by $F$ and $F'$ the subsheaves of $E$ generated by $V$ and $V^{\perp_\sigma}$ respectively. By Proposition \ref{ssequiv} the induced vector bundle is semi-stable, hence by \cite{Po} Proposition $7.1.1$, for all subsheaf $F$ of $E$, one has $$\dfrac{h^0(F(m))}{\x{rk}(F)}\leqslant \dfrac{h^0(E(m))}{\x{rk}(E)},$$ for $m\geqslant \nu$ large enough. By applying this to $F$ and $F'$, and then summing up, we deduce $$h^0(F(\nu))+h^0(F'(\nu))\leqslant h^0(E(m)),$$
	which is the same as $$\x{dim}(V)+\x{dim}(V^{\perp_\sigma})\leqslant \x{dim}(H).$$
	Hence $(H,h)$ is semi-stable. So by Proposition \ref{ssqad}, ${h}$ is semi-stable with respect to the action of $\x{SL}(H)$. 
	
	Conversely, suppose that ${h}$ is semi-stable, then by Proposition \ref{ssqad}, $(H,h)$ is also semi-stable. Let $F$ be a $\sigma-$isotropic subbundle of $E$,  $V=H^0(F(\nu))$ and  $V'=H^0(F^{\perp_\sigma}(\nu))$.  We have $V'\subset V^{\perp_\sigma}$. Indeed, we have a commutative diagram $$\xymatrix{V'\ar[r]& H^0(E(\nu))\ar[r]^\sim \ar[rd]& H^0(\sigma^*E^{*}(\nu)) \ar[d] \ar[r]& H^*\otimes W\ar[d] \\&& H^0(\sigma^*F^*(\nu)) \ar[r] & V^*\otimes W\;.}$$    Since $F$ is totally $\sigma-$isotropic, the composition $V'\ra V^*\otimes W$ is identically zero. Hence $V'\subset V^{\perp_\sigma}$. Since we have also $V\subset V'$, we deduce that $V$ is totally $\sigma-$isotropic subspace of $H$. So we get $$\x{dim}(V)+\x{dim}(V')\leqslant \x{dim}(V)+\x{dim}(V^{\perp_\sigma})\leqslant \x{dim}(H).$$
	It follows 
	\begin{align*}
		\x{dim}(V)+\x{dim}(V')& = \x{deg}(F)+ \x{rk}(F)\nu + \x{deg}(F^{\perp_\sigma})+\x{rk}(F^{\perp_\sigma})\nu +r(1-g_X)\\ 
		&= 2\x{deg}(F) +r\nu +r(1-g_X) \\ 
		&\leqslant r\nu + r(1-g_X)=\x{dim}(H).
	\end{align*}
	Hence $\x{deg}(F)\leqslant 0$. This proves that $E$ is semi-stable. 
\end{proof} 

Now, let $i\geqslant 0$ and denote by $H_i=H\otimes H^0(\al O_X(i))$, $W_i=H^0(\al O_X(2\nu+2i))$. For a $\sigma-$quadratic module $(H,h)$, we denote by $(H_i,h_i)$ the $\sigma-$quadratic module obtained as follows: taking  the tensor product with $\al O(i)$ we obtain  $$H\otimes \al O(i)\lra H^*\otimes \al O(i)\otimes W\lra H^*\otimes \al O(i)\otimes W\otimes H^0(\al O(i))^*\otimes H^0(\al O(i)).$$ Then at the level of global sections we deduce  
$$H_i\lra H_i^*\otimes  W \otimes H^0(\al O(i))^2\lra H_i\otimes W_i,$$
and the composition is denoted $h_i$. 

Let $Z\subset P(H,W)^\sigma$ be the locus of $\sigma-$quadratic forms $\overline{h}$ such that $$\x{rk}(h_i)\leqslant r(\nu+i-g_X+1),\;\;\forall \;i\geqslant 0.$$  It is clear that $Z$ contains the image of $\ak H(\bb C)$. Moreover we have the following 
\begin{theo}
	Let $\al Q^\sigma\subset Z$ be the open of semi-stable points, then $\al Q^\sigma$ represents the functor $\x{Quot}^\sigma$.
\end{theo}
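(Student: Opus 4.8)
The plan is to show that the functor $\x{Quot}^\sigma$ is represented by the open subscheme $\al Q^\sigma$ of semistable points in $Z$, by exhibiting a natural isomorphism $\ak H:\x{Quot}^\sigma\stackrel{\sim}{\lra}\al Q^\sigma$ refining the transformation $\ak H$ already constructed. First I would recall that, by the classical theory of the Quot scheme and Simpson's construction, the ordinary rigidified quotient functor (triples $(E,q)$ with $q:p_1^*\sr F\ra E$ inducing an isomorphism on sections after twisting) is represented by a locally closed subscheme of a Grassmannian, and that imposing semistability cuts out an open subset. The extra datum here is the $\sigma$-symmetric form $\overline\phi$, and the whole point of the construction is that this datum is \emph{equivalent} to the point $\overline h\in P(H,W)^\sigma$ via $h=\,^tq\circ\phi\circ\sigma^*q$. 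So the heart of the argument is to prove that the assignment $(E,q,\overline\phi)\mapsto\overline h$ is injective on isomorphism classes and that its image is exactly $\al Q^\sigma$.

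The key steps, in order, are as follows. First I would verify that $\ak H$ is well-defined on families: for a flat family over $T$, the construction of $h$ commutes with base change (using the chosen $\nu$ so that $H^1$ vanishes and $p_{2*}$ commutes with base change, hence $H_T,W_T$ are the right sheaves on $T$), so $\ak H$ is a morphism of functors into $P(H,W)^\sigma$. Second, I would show the image lands in $Z$: the rank conditions $\x{rk}(h_i)\leqslant r(\nu+i-g_X+1)$ hold because $h_i$ factors through $H^0(E(\nu+i))$, whose dimension is exactly $r(\nu+i-g_X+1)$ by the vanishing of $H^1$ and Riemann--Roch; this is the inequality-bookkeeping that constrains the target. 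Third, the semistability equivalence is already furnished by Proposition \ref{ssequiv2} together with Proposition \ref{ssqad}: a triple with $E_t$ semistable maps into the semistable locus $\al Q^\sigma$, and conversely. Fourth, and this is the crux, I would construct the inverse: given a point $\overline h\in\al Q^\sigma$, one must reconstruct $(E,q,\overline\phi)$. The rank condition in $Z$ forces $h$ to factor as $\,^tq\circ\phi\circ\sigma^*q$ for a quotient $q:H\otimes\al O\ra E(\nu)$ with $E$ locally free of rank $r$; this uses that on $Z$ the image sheaf $\x{Im}(h_i)$ has the Hilbert polynomial of a rank-$r$ degree-$0$ bundle twisted, so the universal quotient over the Grassmannian restricts to a genuine bundle, and the form $\phi$ is recovered from $h$ by descent. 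Recovering $\overline\phi$ (up to scalar) and checking it is $\sigma$-symmetric is exactly the inverse of the defining formula for $h$.

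The main obstacle I expect is the fourth step: proving that a point of $Z$ genuinely comes from a $\sigma$-symmetric bundle, i.e. that the factorization $h=\,^tq\circ\phi\circ\sigma^*q$ exists and is unique up to the $\x{SL}(H)$-action, with $\phi$ an honest isomorphism $\sigma^*E\ra E^*$ rather than a mere map. Concretely one must show that the rank bounds defining $Z$ are sharp enough to force the saturation of $\x{Im}(h)$ to be locally free of the correct rank and that the induced pairing is nondegenerate (which is where semistability, hence injectivity of the $\sigma$-quadratic module noted after the definition, is used). I would handle this by the standard Simpson/Soubaric argument: over the Grassmannian $\x{Grass}(H_i,\,r(\nu+i-g_X+1))$ there is a universal quotient, the conditions cut out a subscheme on which this quotient is flat with the right fibres, and the $\sigma$-structure descends because the form $h$ is $\x{SL}(H)$-equivariant and $\sigma$-symmetric by construction. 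The remaining verifications---that the two constructions are mutually inverse as natural transformations, and that $\ak H$ is an open immersion onto $\al Q^\sigma$ compatible with the $\x{SL}(H)$-linearization---are then formal, following \cite{So} Proposition $2.5$ and the representability results cited for the ordinary Quot functor, so I would state them and refer to the analogous arguments rather than repeat the bookkeeping.
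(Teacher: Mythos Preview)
Your proposal is correct and follows essentially the same strategy as the paper: build $\ak H$, check the image lands in $Z$, invoke Proposition~\ref{ssequiv2} for the semistability equivalence, and construct an inverse on points.

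The one place where the paper is more concrete than your sketch is the inverse construction in step four. Rather than invoking the Grassmannian/universal-quotient machinery you describe, the paper writes down the bundle directly: tensor $h$ by $\al O_X(-\nu)$, compose with the evaluation $ev:H^*\otimes W\otimes\al O_X(-\nu)\ra H^*\otimes\al O_X(\nu)$, and set $E=(H\otimes\al O_X(-\nu))/\x{Ker}(ev\circ h)$. The $\sigma$-symmetry of $h$ then yields an injective $\psi:\sigma^*E\ra E^*$, and the paper runs an elementary numerical squeeze to show it is an isomorphism: injectivity of $\psi$ gives $\x{deg}(E)\leqslant 0$; the rank bound defining $Z$ gives $\x{rk}(E)\leqslant r$; and semistability of $h$ (hence injectivity of $H\ra H^*\otimes W$, hence $H\hookrightarrow H^0(E(\nu))$) gives the reverse inequality $r(\nu+1-g_X)\leqslant \x{deg}(E)+\x{rk}(E)(\nu+1-g_X)$, forcing $\x{deg}(E)=0$, $\x{rk}(E)=r$, and thus $\psi$ surjective. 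Your identification of semistability as the source of nondegeneracy is exactly right; the paper just makes the squeeze explicit with these three inequalities rather than phrasing it through the Hilbert polynomial of $\x{Im}(h_i)$. Note in particular that the rank conditions defining $Z$ are only upper bounds, so they alone do not pin down the Hilbert polynomial as your wording in step four suggests---the lower bound genuinely comes from semistability.
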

\begin{proof}
	We need to prove that $\ak H$ induces an isomorphism of functor between $\x{Quot}^\sigma$ and the functor of points of $\al Q^\sigma$. The main point is to show this for the $\bb C$ valued points. By Proposition \ref{ssequiv2}, we deduce that the image of $\ak H(\bb C)$ is contained in $\al Q^\sigma(\bb C)$. Giving a point $\overline{h}\in \al Q^\sigma$, fix a representative $h$ of $\overline{h}$. Taking the tensor product with $\al O_X(-\nu)$ gives $$H\otimes \al O_X(-\nu)\stackrel{h}{\lra} H^*\otimes W\otimes \al O_X(-\nu)\stackrel{ev}{\lra} H^*\otimes \al O_X(\nu).$$
	Let $F=\x{Ker}(ev\circ h)$ and $E=H\otimes\al O_X(-\nu)/F$. $E$ doesn't depend on the chosen representative of $\overline{h}$ and we have the following commutative diagram 
	$$\xymatrix{0\ar[r] & F\ar[r] & H\otimes\al O_X(-\nu)\ar[r]\ar[d]^{ev\circ h} & E\ar[r]  &0\\ 0\ar[r]&\sigma^*E^*\ar[r]& H^*\otimes \al O_X(\nu) \ar[r]^{\;\;\;\;\;\;p}& \sigma^*F^* \ar[r] &0.  }$$ By definition, $ev\circ h$ vanishes over $F$, hence it factorizes through $E$ giving an injective map $f:E\ra H^*\otimes \al O_X(\nu)$, since $h$ is $\sigma-$symmetric, we deduce that $p\circ ev\circ h=\sigma^*(\,^th\circ\,^tev\circ \,^tp)=0$, so the map $f$ gives a $\sigma-$symmetric morphism $\psi:\sigma^*E\ra E^*$, which is clearly injective. 
	
	Let $s$ be the rank of $E$ and $d$ its degree. By what we have just said we deduce $d\leqslant0$.  From the condition defining $Z$, we deduce that for all $i$  $$d+s(\nu+i+1-g_X)\leqslant \x{rk}(q_i)\leqslant r(\nu +i+1-g_X),$$  so in particular we deduce that $r\geqslant s$. But since $q$ is semi-stable, the map $q:H\ra H^*\otimes W$ is injective, hence $H^0(F(\nu))=0$.  Thus the map $H\ra H^0(E(\nu))$ is injective and we deduce $$r(\nu +1-g_X) \leqslant d+s(\nu+1-g_X),$$ hence $d\geqslant 0$, thus $d=0$. It follows that $s\geqslant r$, and  so $r=s$. Hence $\psi$ is surjective. So $(E,\psi)$ is a $\sigma-$symmetric vector bundles.  
	
	Using the universal family over $\al{Q}^\sigma$, one can make the above construction functorial which gives an inverse to $\ak H$. 
\end{proof}

Consider the functor $$\x{Bun}_X^{\sigma,+}(r):(\x{algebraic varieties})\lra (\x{sets}),$$
that associates to  a variety $T$ the set of isomorphism classes of families $(\sr E,\psi)$ of rank $r$ $\sigma-$symmetric anti-invariant vector bundles over $X$ parameterized by $T$, such that $\sr E_t$ is semi-stable for all $t\in T$.

\begin{theo}
	Consider the good quotient $\al M_X^{\sigma,+}(r)=\x{Quot}^\sigma(\bb C)/\!\!/\x{SL(H)}$. Then $\al M_X^{\sigma,+}(r)$ is a coarse moduli space for the functor  $\x{Bun}_X^{\sigma,+}(r)$, which is a projective variety, and its underlying set consists of $S$-equivalence classes of semi-stable $\sigma-$symmetric anti-invariant vector bundles.
\end{theo}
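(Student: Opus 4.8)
The plan is to deduce everything from the GIT setup already in place, in three stages — constructing the quotient as a projective variety, verifying the coarse moduli property, and identifying its closed points — and then to isolate the one genuinely delicate compatibility.

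First I would treat existence and projectivity. The locus $Z$ is a closed $\x{SL}(H)$-invariant subvariety of the projective space $P(H,W)^\sigma$, being cut out by the closed rank conditions $\x{rk}(h_i)\leqslant r(\nu+i-g_X+1)$, and $\x{SL}(H)$ acts linearly on $\Gamma(H,W)^\sigma$ with the tautological linearization on $\al O_{P(H,W)^\sigma}(1)$. Combining Proposition \ref{ssqad} with Proposition \ref{ssequiv2}, a point of $Z$ is GIT-semistable for this action exactly when the associated $\sigma$-quadratic module is semistable, i.e. exactly when it lies in $\al Q^\sigma=\x{Quot}^\sigma(\bb C)$; thus $\al Q^\sigma$ is the semistable locus of the projective $\x{SL}(H)$-variety $Z$. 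Mumford's theory then yields the good quotient $\al M_X^{\sigma,+}(r)=\al Q^\sigma/\!\!/\x{SL}(H)$ as a projective variety.

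Next I would show $\al M_X^{\sigma,+}(r)$ corepresents $\x{Bun}_X^{\sigma,+}(r)$. Given a family $(\sr E,\psi)$ over $T$ with all $\sr E_t$ semistable of rank $r$ and degree $0$, boundedness ensures that with $\nu$ as chosen the direct image of $\sr E(\nu)$ to $T$ is locally free of rank $m$ with formation commuting with base change. On an open cover $\{T_\alpha\}$ of $T$ I would trivialize this direct image, producing a surjection $H\otimes\al O_X(-\nu)\twoheadrightarrow\sr E$ over $X\times T_\alpha$ together with $\psi$ and the determinant datum, hence an object of $\x{Quot}^\sigma(T_\alpha)$ and a classifying map $T_\alpha\ra\al Q^\sigma$; composing with the quotient $\al Q^\sigma\ra\al M_X^{\sigma,+}(r)$ gives $f_\alpha$. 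Two trivializations differ by a section of $\x{GL}(H)$, but scalars act trivially on $P(H,W)^\sigma$ and, after fixing the trivialization of $\x{det}$, the effective ambiguity group is $\x{SL}(H)$, so the $f_\alpha$ agree on overlaps and glue to $f\colon T\ra\al M_X^{\sigma,+}(r)$. This defines a natural transformation $\x{Bun}_X^{\sigma,+}(r)\ra\x{Hom}(-,\al M_X^{\sigma,+}(r))$. Universality then follows from the universal property of the good quotient: any scheme $N$ receiving a transformation from $\x{Bun}_X^{\sigma,+}(r)$ receives from the universal family on $\al Q^\sigma$ an $\x{SL}(H)$-invariant morphism $\al Q^\sigma\ra N$, which factors uniquely through $\al M_X^{\sigma,+}(r)$.

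For the identification of the underlying set, note that $\bb C$-points of $\al Q^\sigma$ are framed semistable $\sigma$-symmetric bundles, and the good quotient glues two such points exactly when their $\x{SL}(H)$-orbit closures meet. By Proposition \ref{sequiv} the closed points of the quotient are the $S$-equivalence classes of $\sigma$-quadratic modules, the distinguished closed orbit in each closure being that of $\x{gr}(H,h)$. It then remains to check that under $\ak H$ the $\sigma$-isotropic filtration of $(E,\psi)$ constructed above induces, by applying $H^0(-(\nu))$, the isotropic filtration of the $\sigma$-quadratic module $(H,h)$, and that $\x{gr}^\sigma(E)$ corresponds to $\x{gr}(H,h)$. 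Granting this, two semistable $\sigma$-symmetric bundles map to the same point precisely when $\x{gr}^\sigma(E)\cong\x{gr}^\sigma(E')$, i.e. when they are $S$-equivalent, while surjectivity onto $S$-equivalence classes comes from Proposition \ref{ssequiv2} and the inverse construction of the preceding theorem.

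I expect the principal obstacle to be exactly this last compatibility of the two $\x{gr}$ constructions. It requires that a degree-$0$ $\sigma$-isotropic subbundle $F$ correspond, via $V=H^0(F(\nu))$, to a totally $\sigma$-isotropic subspace with $\x{dim}(V)+\x{dim}(V^{\perp_\sigma})=\x{dim}(H)$ and conversely — which rests on the exact degree-and-dimension bookkeeping of Proposition \ref{ssequiv2} together with $H^1$-vanishing and $H^0$-surjectivity for the relevant sub- and quotient sheaves — and that the reduced forms on the successive subquotients $F_{i-1}^{\perp_\sigma}/F_i^{\perp_\sigma}$ match the reduced $\sigma$-quadratic modules $H_{i-1}^{\perp_\sigma}/H_{i-1}$. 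The gluing point in the second stage, namely that after trivializing the determinant the residual ambiguity is $\x{SL}(H)$ rather than $\x{GL}(H)$, is a secondary technical matter.
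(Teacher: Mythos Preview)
Your proposal is correct and follows the same strategy as the paper's own proof---GIT quotient of $Z^{ss}$ for projectivity, local trivialization of ${p_2}_*\sr E(\nu)$ for the coarse moduli map, and Proposition~\ref{sequiv} together with compatibility of the two $\x{gr}$ constructions for the description of closed points---and in fact you spell out several steps (projectivity, universality) that the paper leaves implicit, while correctly flagging the $\x{gr}$-compatibility as the one point the paper simply asserts. One small slip: neither $\x{Bun}_X^{\sigma,+}(r)$ nor $\x{Quot}^\sigma$ carries a determinant trivialization in the paper's setup (the form $\phi$ is already taken modulo $\bb C^*$), so the reduction of the gluing ambiguity from $\x{GL}(H)$ to $\x{SL}(H)$ comes purely from scalars acting trivially on $P(H,W)^\sigma$, not from fixing $\x{det}$.
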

\begin{proof}
	Consider a family $(\sr E,\psi)$ of  $\sigma-$symmetric semi-stable bundles parameterized by a variety $T$, then for $\nu$ big enough, ${p_2}_*\sr E(\nu)$ and ${p_2}_*(\sigma^*\sr E^*(\nu))$  are locally free, so by choosing local trivializations, we deduce a unique, up to an action of $\x{SL}(H)$, map to $\al Q^\sigma$ . Thus we get a morphism $T\lra \al M_X^{\sigma,+}(r)$. This is obviously functorial in $T$. 
	
	A point  $a\in \al M_X^{\sigma,+}(r)$, corresponds by  $\ak H$ to a point of $\al Q^\sigma/\!\!/\x{SL(H)}$, this transformation respects the graded $\x{gr}$. Hence, using Proposition \ref{sequiv}, we deduce that  $a$ represents an $S-$equivalence class of semi-stable $\sigma-$symmetric vector bundles. 
\end{proof}

\subsubsection{$\sigma-$alternating case} The construction of the moduli space $\al M_X^{\sigma,-}(r)$ of semi-stable $\sigma-$alternating vector bundles follows the same method as the case of  $\sigma-$symmetric vector bundles, using the moduli of $\sigma-$alternating modules rather than $\sigma-$quadratic ones. 

\subsection{Some properties of $\al M_X^{\sigma,+}(r)$ and $\al M_X^{\sigma,-}(r)$}
By Proposition \ref{ssequiv} we  have canonical forgetful maps $$\al {M}_X^{\sigma,+}(r)\ra \ol{\al{U}}_X(r,0),$$ $$\al {M}_X^{\sigma,-}(r)\ra \ol{\al{U}}_X(r,0),$$ where $\ol{\al U}_X(r,0)$ is the moduli space of semi-stable vector bundles of rank $r$ and degree $0$ over $X$.  The  images of these maps are obviously $\ol{\al{U}}^{\sigma,\pm}_X(r)$. A natural question arises: what are the degrees of these maps? 
\begin{rema}
	Note that the involution $E\ra \sigma^*E^*$ is well defined on $\ol{\al{\al U}}_X(r,0)$, since we have $\x{gr}(\sigma^*E^*)=\sigma^*(\x{gr}(E))^*$.
\end{rema}
\begin{prop} \label{forgetful1} The forgetful maps  $\al{M}_X^{\sigma,+}(r)\longrightarrow \ol{\al{U}}^{\sigma,+}_X(r)$ and $\al {M}_X^{\sigma,-}(r)\longrightarrow \ol{\al{U}}^{\sigma,-}_X(r)$  are injective. In particular they are  bijective. 
\end{prop}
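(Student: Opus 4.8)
The plan is to reduce the injectivity to a uniqueness statement for anti-invariant structures on a fixed polystable bundle, and then to deduce that statement from the classification of nondegenerate bilinear forms over $\bb C$.

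First I would pass to $\sigma-$graded representatives. A point of $\al M_X^{\sigma,+}(r)$ is an $S-$equivalence class, represented by the $\sigma-$graded bundle $\x{gr}^\sigma(E,\psi)$; its underlying vector bundle is polystable and, since every successive quotient of the filtration of the preceding Lemma is a semistable bundle of degree $0$, is isomorphic to $\x{gr}(E)$. A point of $\ol{\al U}^{\sigma,+}_X(r)$ is represented by $\x{gr}(E)$, and the forgetful map carries $[\x{gr}^\sigma(E,\psi)]$ to $[\x{gr}(E)]$. Hence if two classes in $\al M_X^{\sigma,+}(r)$ have the same image, their $\sigma-$graded representatives equip one and the same polystable bundle $P$ with two $\sigma-$symmetric anti-invariant structures $\psi_1,\psi_2$. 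Injectivity thus reduces to the claim that any two $\sigma-$symmetric anti-invariant structures on a fixed polystable bundle $P$ give isomorphic $\sigma-$symmetric anti-invariant bundles.

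To prove the claim, decompose $P\cong\bigoplus_j W_j^{\oplus m_j}$ into isotypic components, the $W_j$ being pairwise non-isomorphic stable bundles of degree $0$, and let $\tau$ be the involution on the indexing set defined by $\sigma^*W_j^*\cong W_{\tau(j)}$. Since $\psi:\sigma^*P\ra P^*$ is an isomorphism of polystable bundles and there is no nonzero morphism between non-isomorphic stable bundles, $\psi$ respects the isotypic decomposition and splits into blocks indexed by the $\tau-$orbits, forcing $m_j=m_{\tau(j)}$. For an orbit $\{j,\tau(j)\}$ with $j\not=\tau(j)$, the $\sigma-$symmetry condition makes the block a nondegenerate pairing of the two multiplicity spaces whose off-diagonal components determine one another; after a change of basis by $\x{Aut}(W_j^{\oplus m_j})\times\x{Aut}(W_{\tau(j)}^{\oplus m_j})\cong\x{GL}_{m_j}\times\x{GL}_{m_j}$ it becomes the standard hyperbolic form, independently of $\psi$. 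For a fixed point $j=\tau(j)$ (so $W_j$ is self-dual) the block has the form $B\otimes\phi_j$ for a fixed isomorphism $\phi_j:\sigma^*W_j\ra W_j^*$, and the condition $\sigma^*\psi=\,^t\psi$ forces $B$ to be symmetric or skew-symmetric according to the sign in $\sigma^*\phi_j=\pm\,^t\phi_j$, a sign depending only on $W_j$. Over $\bb C$ a nondegenerate symmetric (resp. skew-symmetric) form of given size is unique up to the congruence action of $\x{GL}_{m_j}=\x{Aut}(W_j^{\oplus m_j})$, so this block too is determined up to isomorphism by $W_j$ alone. Assembling the blocks yields $(P,\psi_1)\cong(P,\psi_2)$, and the claim follows.

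The $\sigma-$alternating case is handled identically, the roles of symmetric and skew-symmetric matrices in the fixed-point blocks being interchanged. The essential geometric input is that an anti-invariant structure carries no discrete invariant beyond its underlying bundle, which is precisely the uniqueness over $\bb C$ of nondegenerate symmetric and symplectic forms; the main technical obstacle is the bookkeeping of the block decomposition of $\psi$, namely verifying that the symmetry (resp. alternation) condition really does pin down each block up to the automorphism group of the corresponding isotypic piece. Once injectivity is established, bijectivity is immediate because the images of the two maps were already identified with $\ol{\al U}^{\sigma,\pm}_X(r)$.
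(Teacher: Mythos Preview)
Your argument is correct and follows the paper's route almost exactly: pass to the polystable $\sigma$-graded representative, split it into isotypic blocks according to how $E\mapsto\sigma^*E^*$ permutes the stable summands, and then invoke the transitivity of the congruence action of $\x{GL}_m$ on nondegenerate (skew-)symmetric $m\times m$ matrices over $\bb C$. Your bookkeeping is in fact slightly sharper than the paper's on one point: on an isotypic block $W_j^{\oplus m_j}$ with $W_j$ $\sigma$-alternating the matrix $B$ is forced to be skew-symmetric rather than symmetric, a case the paper's wording glosses over, though the conclusion is unaffected since nondegenerate alternating forms are likewise unique up to congruence.
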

\begin{proof} 
	We treat the $\sigma-$symmetric case. 	Let $(E,\psi)$ be a $\sigma-$symmetric vector bundle, suppose that $E$ is stable, so $\x{Aut}_{\x{GL}_r}(E)=\bb C^*$ and  $\psi:\sigma^*E\ra E^*$ is unique up to scalar multiplication. The action of $\x{Aut}_{\text{GL}_r}(E)$ on these $\sigma-$symmetric forms is given by $$f\cdot\psi=(\,^tf)\psi \;(\sigma^*f). $$ If $f=\xi \x{Id}_E$, with $\xi\in \bb C^*$, then this action is simply given by $\psi\ra\xi^2\psi$. It follows that this action is transitive, hence $(E,\psi)$ and $(E,\lambda\psi)$ are isomorphic as $\sigma-$symmetric vector bundles.
	
	If $E$ is strictly semi-stable $\sigma-$symmetric anti-invariant vector bundle. It is easy to see that $E$ can be decomposed as  $$E=\left(\bigoplus_{i=1}^aF_i^{\oplus f_i}\right)\oplus\left(\bigoplus_{j=1}^b G_j^{\oplus g_j}\right)\oplus\left(\bigoplus_{k=1}^c(H_k\oplus \sigma^*H_k^*)_k^{\oplus h_k}\right)$$ 
	with $F_i$, $G_j$ and $H_k$ stable vector bundles (mutually non isomorphic), such that 
	\begin{itemize}
		\item $F_i$ are  $\sigma-$symmetric (resp. $\sigma-$alternating).
		\item $G_j$ are $\sigma-$alternating  (resp. $\sigma-$symmetric).
		\item $H_k$ are not $\sigma-$anti-invariant.
	\end{itemize}
So this reduces the question to the case when  the vector bundle  $E$ is of the form $F^{\oplus d}$ or $(G\oplus \sigma^*G^*)^{\oplus d}$ for stable anti-invariant vector bundle $F$  and  stable non-anti-invariant vector bundle $G$. Now, the set of $\sigma-$symmetric isomorphisms $\psi:\sigma^*E\ra E^*$ is equal to the locus of symmetric matrices of $\x{GL}_d(\bb C)$ in both cases. Hence it is sufficient to  use the fact that a non-degenerated symmetric matrices can be decomposed in the form $^tM\times M$, for some nondegenerated matrix $M$. This shows that all the $\sigma-$symmetric isomorphisms on $E$ define the same point in the moduli space $\al{M}_X^{\sigma,+}(r)$.
	
\end{proof}



The case of vector bundles with trivial determinant is slightly different.  For simplicity we consider the forgetful maps just on the stable loci $$\al{SM}_X^{\sigma,\pm,s}(r)\lra \al{SU}_X^{\sigma,\pm}(r).$$ Here $\al{SM}_X^{\sigma,\pm,s}(r)$ is the locus of stable $\sigma-$symmetric or $\sigma-$alternating vector bundles in the moduli spaces $\al{M}_X^{\sigma,\pm}(r)$.

\begin{prop} We have two cases:
	\begin{enumerate}
		\item [$(1)$] If $r$ is odd, then the forgetful map  $\al{SM}_X^{\sigma,+}(r)\longrightarrow \al{SU}^{\sigma,+}_X(r)$ is injective.
		\item [$(2)$] If $r$ is even, the forgetful map  $\al{SM}_X^{\sigma,\pm}(r)\longrightarrow \al{SU}^{\sigma,\pm}_X(r)$ is of degree $2$.
	\end{enumerate}
\end{prop}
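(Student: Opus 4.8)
The plan is to restrict to the stable locus, reduce the computation of each fibre of the forgetful map to a count of $\sigma$-symmetric (resp. $\sigma$-alternating) isomorphisms on a fixed bundle, and then track how the trivial-determinant constraint rigidifies the scalars that are available. First I would recall from the proof of Proposition \ref{forgetful1} that for a stable $E$ the space $\x{Hom}(\sigma^*E,E^*)$ is one-dimensional, so every $\sigma$-symmetric isomorphism has the form $c\,\psi_0$ with $c\in\bb C^*$ and a fixed $\psi_0$; since scaling preserves the symmetry condition $\sigma^*\psi={}^t\psi$, the set of $\sigma$-symmetric isomorphisms is exactly the single $\bb C^*$-orbit $\bb C^*\cdot\psi_0$. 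In the determinant-free case $\x{Aut}_{\x{GL}_r}(E)=\bb C^*$ acts on this orbit through $\psi\mapsto\xi^2\psi$, and surjectivity of squaring on $\bb C^*$ is precisely what made the map bijective there.

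The difference with the trivial-determinant case lies in which automorphisms and which scalars are admissible. Fixing a trivialization $\delta$ of $\x{det}(E)$, an automorphism $\xi\,\x{Id}_E$ preserves $\delta$ if and only if $\xi^r=1$, so $\x{Aut}_{\x{SL}_r}(E)=\mu_r$, the group of $r$-th roots of unity, instead of all of $\bb C^*$. Likewise, compatibility of $\psi=c\,\psi_0$ with $\delta$ amounts to $\x{det}(\psi)=1$ under the identifications, i.e. $c^r=\x{det}(\psi_0)^{-1}$, and this equation has exactly $r$ solutions; hence the $\sigma$-symmetric isomorphisms compatible with $\delta$ form a $\mu_r$-torsor $T\subset\bb C^*\cdot\psi_0$. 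Next I would note that $\x{Aut}_{\x{SL}_r}(E)=\mu_r$ still acts by $\psi\mapsto\xi^2\psi$ (with $f=\xi\,\x{Id}_E$ as in Proposition \ref{forgetful1}), and that $\xi^{2r}=1$ ensures $\xi^2\psi\in T$, so $\mu_r$ acts on $T$ through the squaring homomorphism.

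The fibre of the forgetful map over $E$ is therefore $T/\mu_r$, whose cardinality equals the order of the cokernel of $\mu_r\xrightarrow{\,\xi\mapsto\xi^2\,}\mu_r$, namely $\gcd(2,r)$. When $r$ is odd squaring is an automorphism of $\mu_r$, the action is transitive, and the fibre is a single point, giving injectivity in case $(1)$. When $r$ is even the image is the index-two subgroup of squares, so there are exactly two orbits and the fibre consists of two points over every stable $E$; as this cardinality is constant on the whole stable locus, the map is finite of degree $2$, which is case $(2)$. The $\sigma$-alternating case is handled identically: there $r$ is forced to be even, the compatible isomorphisms again form a $\mu_r$-torsor (now cut out by a Pfaffian normalization in place of the determinant), and the same squaring-map count yields degree $2$.

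I would expect the main obstacle to be the bookkeeping around the normalization ``$\psi$ compatible with $\delta$'': one must pin down that it is exactly the condition $\x{det}(\psi)=1$, so that $T$ is genuinely a $\mu_r$-torsor and not a torsor under some other cyclic group, and in the alternating case one must replace $\x{det}$ by the Pfaffian without altering the square-map count. A secondary technical point is justifying that the constant fibre cardinality $2$ really computes the degree, i.e. that the forgetful map is finite (proper with finite fibres) on the stable locus rather than merely quasi-finite.
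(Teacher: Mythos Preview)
Your proof is correct and follows essentially the same approach as the paper: reduce to the stable locus, note that $\x{Aut}_{\x{SL}_r}(E)=\mu_r$, observe that the $\sigma$-symmetric isomorphisms with $\det(\psi)=1$ form a $\mu_r$-torsor, and count orbits under the squaring action $\psi\mapsto\xi^2\psi$. One small remark: in the $\sigma$-alternating case the paper simply reuses the condition $\det(\psi)=1$ (so again a $\mu_r$-torsor), rather than passing to a Pfaffian normalization; your cautious aside about the Pfaffian is unnecessary here, and in fact a Pfaffian normalization would change the torsor to $\mu_{r/2}$ and require a slightly different orbit count, so it is cleaner to stick with the determinant condition as the paper does.
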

\begin{proof}
	Let $(E,\psi)$ be a $\sigma-$symmetric vector bundle with a trivialization of its determinant. Suppose that $E$ is stable. As $\x{Aut}_{\x{GL}_r}(E)=\bb C^*$, we see that $\x{Aut}_{\text{SL}_r}(E)=\mu_r$, where $\mu_r$ is the group of $r^{th}$ roots of unity. Remark  that the map $\mu_r\ra \mu_r$, given by $\xi\mapsto\xi^2$ is a bijection if $r$ is odd, and it is two-to-one on its image if $r$ is even.
	\begin{enumerate}
		\item [$(1)$] If $r$ is odd, since $E$ is stable, $\psi:\sigma^*E\ra E^*$ is unique up to scalar multiplication, as $\text{det}(\psi)=1$, the number of such isomorphisms is exactly $r$. The action of $\x{Aut}_{\text{SL}_r}(E)$ on these $r$ $\sigma-$symmetric forms is given by $$f\cdot\psi=(\,^tf)\psi \;(\sigma^*f).$$ As $f=\xi\x{Id}_E$, for $\xi\in \mu_r$, then  we conclude as in the proof of Proposition \ref{forgetful1} that this action is transitive.
		\item [$(2)$] Assume $r$ is even, with the same argument as above, we see that the action has two different orbits. So $E$ admits two non equivalent $\sigma-$symmetric forms. The same argument applies for the $\sigma-$alternating case.
	\end{enumerate}
\end{proof}
\begin{rema}
	Note that the above Proposition is similar to the situation of forgetful map of orthogonal bundles. See \cite{OS}.\\
\end{rema}

\bibliographystyle{alpha}
\bibliography{bib}
\end{document}